\documentclass[11pt,reqno]{amsart}


\usepackage{amsmath,amsfonts,amsthm,amssymb}
\usepackage{color,soul}
\usepackage{graphicx}
\usepackage{verbatim}
\usepackage{color}
\usepackage{amscd}
\usepackage{verbatim}
\usepackage{mathrsfs}
\usepackage[colorlinks,citecolor=red,pagebackref,hypertexnames=false, breaklinks]{hyperref}
\usepackage{esint}


\begin{document}
\newcommand{\M}{\mathscr{M}}
\newcommand{\loc}{{\mathrm{loc}}}
\newcommand{\core}{C_0^{\infty}(\Omega)}
\newcommand{\sob}{W^{1,p}(\Omega)}
\newcommand{\sobloc}{W^{1,p}_{\mathrm{loc}}(\Omega)}
\newcommand{\merhav}{{\mathcal D}^{1,p}}
\newcommand{\be}{\begin{equation}}
\newcommand{\ee}{\end{equation}}
\newcommand{\mysection}[1]{\section{#1}\setcounter{equation}{0}}
\newcommand{\laplace}{\Delta}
\newcommand{\pl}{\laplace_p}
\newcommand{\grad}{\nabla}
\newcommand{\pd}{\partial}
\newcommand{\bo}{\pd}
\newcommand{\csub}{\subset \subset}
\newcommand{\sm}{\setminus}
\newcommand{\ssm}{:}
\newcommand{\diver}{\mathrm{div}\,}
\newcommand{\bea}{\begin{eqnarray}}
\newcommand{\eea}{\end{eqnarray}}
\newcommand{\bean}{\begin{eqnarray*}}
\newcommand{\eean}{\end{eqnarray*}}
\newcommand{\thkl}{\rule[-.5mm]{.3mm}{3mm}}
\newcommand{\cw}{\stackrel{\rightharpoonup}{\rightharpoonup}}
\newcommand{\id}{\operatorname{id}}
\newcommand{\supp}{\operatorname{supp}}
\newcommand{\calE}{\mathcal{E}}
\newcommand{\calF}{\mathcal{F}}
\newcommand{\wlim}{\mbox{ w-lim }}
\newcommand{\mymu}{{x_N^{-p_*}}}
\newcommand{\R}{{\mathbb R}}
\newcommand{\N}{{\mathbb N}}
\newcommand{\Z}{{\mathbb Z}}
\newcommand{\Q}{{\mathbb Q}}
\newcommand{\abs}[1]{\lvert#1\rvert}
\newtheorem{theorem}{Theorem}[section]
\newtheorem{corollary}[theorem]{Corollary}
\newtheorem{lemma}[theorem]{Lemma}
\newtheorem{notation}[theorem]{Notation}
\newtheorem{definition}[theorem]{Definition}
\newtheorem{remark}[theorem]{Remark}
\newtheorem{proposition}[theorem]{Proposition}
\newtheorem{assertion}[theorem]{Assertion}
\newtheorem{problem}[theorem]{Problem}
\newtheorem{conjecture}[theorem]{Conjecture}
\newtheorem{question}[theorem]{Question}
\newtheorem{example}[theorem]{Example}
\newtheorem{Thm}[theorem]{Theorem}
\newtheorem{Lem}[theorem]{Lemma}
\newtheorem{Pro}[theorem]{Proposition}
\newtheorem{Def}[theorem]{Definition}
\newtheorem{Exa}[theorem]{Example}
\newtheorem{Exs}[theorem]{Examples}
\newtheorem{Rems}[theorem]{Remarks}
\newtheorem{Rem}[theorem]{Remark}

\newtheorem{Cor}[theorem]{Corollary}
\newtheorem{Conj}[theorem]{Conjecture}
\newtheorem{Prob}[theorem]{Problem}
\newtheorem{Ques}[theorem]{Question}
\newtheorem*{corollary*}{Corollary}
\newtheorem*{theorem*}{Theorem}
\newcommand{\pf}{\noindent \mbox{{\bf Proof}: }}


\renewcommand{\theequation}{\thesection.\arabic{equation}}
\catcode`@=11 \@addtoreset{equation}{section} \catcode`@=12
\newcommand{\Real}{\mathbb{R}}
\newcommand{\real}{\mathbb{R}}
\newcommand{\Nat}{\mathbb{N}}
\newcommand{\ZZ}{\mathbb{Z}}
\newcommand{\CC}{\mathbb{C}}
\newcommand{\Pess}{\opname{Pess}}
\newcommand{\Proof}{\mbox{\noindent {\bf Proof} \hspace{2mm}}}
\newcommand{\mbinom}[2]{\left (\!\!{\renewcommand{\arraystretch}{0.5}
\mbox{$\begin{array}[c]{c}  #1\\ #2  \end{array}$}}\!\! \right )}
\newcommand{\brang}[1]{\langle #1 \rangle}
\newcommand{\vstrut}[1]{\rule{0mm}{#1mm}}
\newcommand{\rec}[1]{\frac{1}{#1}}
\newcommand{\set}[1]{\{#1\}}
\newcommand{\dist}[2]{$\mbox{\rm dist}\,(#1,#2)$}
\newcommand{\opname}[1]{\mbox{\rm #1}\,}
\newcommand{\mb}[1]{\;\mbox{ #1 }\;}
\newcommand{\undersym}[2]
 {{\renewcommand{\arraystretch}{0.5}  \mbox{$\begin{array}[t]{c}
 #1\\ #2  \end{array}$}}}
\newlength{\wex}  \newlength{\hex}
\newcommand{\understack}[3]{%
 \settowidth{\wex}{\mbox{$#3$}} \settoheight{\hex}{\mbox{$#1$}}
 \hspace{\wex}  \raisebox{-1.2\hex}{\makebox[-\wex][c]{$#2$}}
 \makebox[\wex][c]{$#1$}   }%
\newcommand{\smit}[1]{\mbox{\small \it #1}}
\newcommand{\lgit}[1]{\mbox{\large \it #1}}
\newcommand{\scts}[1]{\scriptstyle #1}
\newcommand{\scss}[1]{\scriptscriptstyle #1}
\newcommand{\txts}[1]{\textstyle #1}
\newcommand{\dsps}[1]{\displaystyle #1}
\newcommand{\dx}{\,\mathrm{d}x}
\newcommand{\dy}{\,\mathrm{d}y}
\newcommand{\dz}{\,\mathrm{d}z}
\newcommand{\dt}{\,\mathrm{d}t}
\newcommand{\dr}{\,\mathrm{d}r}
\newcommand{\du}{\,\mathrm{d}u}
\newcommand{\dv}{\,\mathrm{d}v}
\newcommand{\dV}{\,\mathrm{d}V}
\newcommand{\ds}{\,\mathrm{d}s}
\newcommand{\dS}{\,\mathrm{d}S}
\newcommand{\dk}{\,\mathrm{d}k}

\newcommand{\dphi}{\,\mathrm{d}\phi}
\newcommand{\dtau}{\,\mathrm{d}\tau}
\newcommand{\dxi}{\,\mathrm{d}\xi}
\newcommand{\deta}{\,\mathrm{d}\eta}
\newcommand{\dsigma}{\,\mathrm{d}\sigma}
\newcommand{\dtheta}{\,\mathrm{d}\theta}
\newcommand{\dnu}{\,\mathrm{d}\nu}
\newcommand{\Ker}{\mathrm{Ker}}
\newcommand{\Ima}{\mathrm{Im}}

\def\Xint#1{\mathchoice
{\XXint\displaystyle\textstyle{#1}}%
{\XXint\textstyle\scriptstyle{#1}}%
{\XXint\scriptstyle\scriptscriptstyle{#1}}%
{\XXint\scriptscriptstyle\scriptscriptstyle{#1}}%
\!\int}
\def\XXint#1#2#3{{\setbox0=\hbox{$#1{#2#3}{\int}$ }
\vcenter{\hbox{$#2#3$ }}\kern-.6\wd0}}
\def\dashint{\Xint-}

\newcommand{\Rd}{\color{red}}
\newcommand{\Bk}{\color{black}}
\newcommand{\Mg}{\color{magenta}}
\newcommand{\Wh}{\color{white}}
\newcommand{\Bl}{\color{blue}}
\newcommand{\Yl}{\color{yellow}}


\renewcommand{\div}{\mathrm{div}}
\newcommand{\red}[1]{{\color{red} #1}}

\newcommand{\cqfd}{\begin{flushright}                  
			 $\Box$
                 \end{flushright}}
                 
\newcommand{\todo}[1]{\vspace{5 mm}\par \noindent
\marginpar{\textsc{}} \framebox{\begin{minipage}[c]{0.95
\textwidth} \tt #1
\end{minipage}}\vspace{5 mm}\par}


\title[Reverse inequality for Riesz transforms] {Reverse inequality for the Riesz transforms on Riemannian manifolds}
\date{today}
\author{Baptiste Devyver and Emmanuel Russ}
\address{Baptiste Devyver, Institut Fourier - Universit\'e de Grenoble Alpes, France}
\email{baptiste.devyver@univ-grenoble-alpes.fr}
\address{Emmanuel Russ, Institut Fourier - Universit\'e de Grenoble Alpes, France}
\email{emmanuel.russ@univ-grenoble-alpes.fr}

\maketitle
\tableofcontents

\begin{abstract}  
Let $M$ be a complete Riemannian manifold satisfying the doubling volume condition for geodesic balls and $L^q$ scaled Poincar\'e inequalities on suitable remote balls for some $q<2$. We prove the inequality $\left\Vert \Delta^{1/2}f\right\Vert_p\lesssim \left\Vert \nabla f\right\Vert_p$ for all $p\in (q,2]$, which generalizes previous results due to Auscher and Coulhon. Our conclusion applies, in particular, when $M$ has a finite number of Euclidean ends. The proof strongly relies on Hardy inequalities, which are also new in this context and of independent interest.\ \par

\end{abstract}

\section{Introduction}
Throughout the paper, if $A(f)$ and $B(f)$ are two nonnegative quantities defined for all $f$ belonging to a set $E$, the notation $A(f)\lesssim B(f)$ means that there exists $C>0$ such that $A(f)\le CB(f)$ for all $f\in E$, while $A(f)\simeq B(f)$ means that $A(f)\lesssim B(f)$ and $B(f)\lesssim A(f)$.  \par
\noindent Let $M$ be a complete connected noncompact Riemannian manifold. Denote by $\mu$ the Riemannian measure, by $\nabla$ the Riemannian gradient and by $\Delta$ the Laplace-Beltrami operator. The volume of a geodesic ball $B$ will be denoted by $V(B)$ instead of $\mu(B)$. In this work, we consider the following three inequalities for $p\in (1,\infty)$ (where the $L^p$-norms are computed with respect to the measure $\mu$):

\begin{equation}\label{eq:Ep}\tag{$\mathrm{E}_p$}
||\Delta^{1/2}u||_p\lesssim ||\nabla u||_p\lesssim ||\Delta^{1/2}u||_p,\quad \forall u\in C_0^\infty(M)
\end{equation}

\begin{equation}\label{eq:Rp}\tag{$\mathrm{R}_p$}
||\nabla u||_p\lesssim ||\Delta^{1/2}u||_p,\quad \forall u\in C_0^\infty(M)
\end{equation}

\begin{equation}\label{eq:RRp}\tag{$\mathrm{RR}_p$}||\Delta^{1/2}u||_p\lesssim ||\nabla u||_p,\quad \forall u\in C_0^\infty(M)
\end{equation}
It follows easily from the Green formula and the self-adjointness of $\Delta$ that 

$$||\nabla u||_2^2=(\Delta u,u)=||\Delta^{1/2}||_2^2,\quad \forall u\in C_0^\infty(M).$$
Consequently, \eqref{eq:Ep} holds for $p=2$ on any complete Riemannian manifold. The inequality \eqref{eq:Rp} is equivalent to the $L^p$-boundedness of the Riesz transform $\mathscr{R}=\nabla\Delta^{-1/2}$. A well-known duality argument, originally introduced in \cite{B}, shows that \eqref{eq:Rp} implies ($\mathrm{RR}_q$) for $q=p'$ the conjugate exponent, but the converse implication does not hold (see Section \ref{sec:gauss} below). The present work focuses on the inequality \eqref{eq:RRp}, which we establish for suitable ranges of $p$ in situations where inequalities of the form $(\mathrm{R}_{p^{\prime}})$ do not hold. The proofs are strongly related to the geometry of the underlying manifold and to the behaviour of the heat kernel $p_t$, namely the kernel of the semigroup generated by $\Delta$. \par

\medskip

\noindent We consider the case where $p_t$ satisfies Gaussian type pointwise upper estimates, and prove that, if a scaled $L^q$ Poincar\'e inequality holds on remote balls of $M$ for some $q\in [1,2)$, then \eqref{eq:RRp} for $p\in (q,2]$ (see Section \ref{sec:gauss} below for precise statements). 
\subsection{Previously known reverse Riesz inequalities} \label{sec:gauss}
In \cite{AC}, P. Auscher and T. Coulhon have studied the inequality \eqref{eq:RRp}, and the relationship between \eqref{eq:Rp} and ($\mathrm{RR}_q$), $q=p'$. In order to recall some of their results, we need to introduce some geometric inequalities about $M$. \par \noindent For all $x\in M$ and all $r>0$, let $B(x,r)$ be the open geodesic ball with center $x$ and radius $r$ and set $V(x,r):=\mu(B(x,r))$. Say that the doubling volume property holds if and only if, for all $x\in M$ and all $r>0$,
\begin{equation} \label{eq:DV} \tag{D}
V(x,2r)\lesssim V(x,r).
\end{equation}
By iteration, this condition implies at once that there exists $D>0$ such that for all $x\in M$ and all $0<r<R$,

\begin{equation}\label{eq:VD}\tag{VD}
V(x,R)\lesssim \left(\frac{R}{r}\right)^DV(x,r).
\end{equation}
An easy consequence of \eqref{eq:DV} is that for every $0<r\leq R$, and for every $x,y\in M$ such that $d(x,y)\leq r$, one has

\begin{equation}\label{eq:eqvol}
V(x,R)\simeq V(y,R).
\end{equation}
We also consider a reverse doubling volume condition: there exists $\nu>0$ such that, for all $x\in M$ and all $0<r<R$,
\begin{equation} \label{eq:RDV}\tag{RD}
\left(\frac Rr\right)^{\nu}V(x,r) \lesssim V(x,R) .
\end{equation}
It is known that since $M$ is non-compact and connected, \eqref{eq:DV} implies \eqref{eq:RDV} for some $\nu>0$. \par
\noindent Let $p\in [1,\infty)$. We consider the scaled $L^p$ Poincar\'{e} inequality on balls, namely :

\begin{equation}\label{eq:Pp}\tag{$\mathrm{P}_p$}
||f-f_B||_{L^p(B)}\lesssim r||\nabla f||_{L^p(B)},\quad f\in C^\infty(B),\,\forall B=B(x,r)\subset M,
\end{equation}
where $f_B$ denotes the average of $f$ on $B$, that is $f_B:=V(B)^{-1}\int_B f$. \par

\medskip

\noindent Among other things, Auscher and Coulhon prove in \cite{AC} that 

\begin{itemize}

\item[(i)] if the Hodge projector onto exact $1$-forms $\Pi:=\mathscr{R}\mathscr{R}^*$ is $L^p$-bounded, then ($\mathrm{RR}_{p'}$) implies \eqref{eq:Rp},

\item[(ii)] if \eqref{eq:DV} holds, as well as $(\mathrm{P}_q)$ for some $q\in [1,2)$, then, \eqref{eq:RRp} holds for all $p\in (q,2)$.

\end{itemize}
As a consequence of (ii), one can see that the implication ($\mathrm{RR}_{p'}$) $\Rightarrow$ \eqref{eq:Rp} is false in general. Indeed, let $M$ be a complete Riemannian manifold of dimension $n\geq 3$, such that $M$ has only one end, and this end is asymptotically conic (see \cite{GH1}); assume furthermore that the first eigenvalue of the cross-section of the corresponding cone is strictly less than $n-1$. Then, according to \cite[Theorem 1.4]{GH1}, \eqref{eq:Rp} holds on $M$ if and only if $p\in (1,p^*)$, where $n<p^*<+\infty$ only depends on the first eigenvalue of the cross-section of the cone. However, such a manifold $M$ satisfies $(\mathrm{P}_1$), so that \eqref{eq:RRp} holds for all $p\in (1,2]$ according to (ii). \par
\noindent Observe that $(ii)$ above does not apply in the case where $M$ is, for instance, the connected sum of two copies of $\R^n$, since $(P_2)$ does not hold in this case (\cite[Appendix]{GI}). As far as the Riesz transforms are concerned, it was shown in \cite{CCH} that \eqref{eq:Rp} holds on $M$ if and only if $1<p<n$ if $n\ge 3$, and if and only if $1<p\le 2$ if $n=2$, which implies that \eqref{eq:RRp} holds when $p>\frac{n}{n-1}$. However, the validity of $(\mathrm{RR}_q)$ for $1<q\le \frac{n}{n-1}$ remained open in that case. \par

%

\subsection{New results}
In the present work, we extend statement $(ii)$ above to the case where the $L^q$ Poincar\'e inequality only holds on some ``remote'' balls of $M$.\par
\noindent  Let us fix once and for all a point $o$ in $M$. For $x\in M$, we will denote $r(x)=d(x,o)$. We let $B_0=B(o,r_0)$, where $r_0>0$ is large enough and will be determined later. 

\begin{definition} \label{def:admballs}
Let $x\in M$ and $r>0$.
\begin{enumerate}
\item The ball $B(x,r)$ is called {\em remote} if $r\leq \frac{r(x)}{2}$.
\item The ball $B(x,r)$ is called {\em anchored} if $x=o$.
\item The ball $B(x,r)$ is called {\em admissible} if either $B$ is remote, or $B(x,r)$ is anchored and $r\leq r_0$.
\end{enumerate}
\end{definition}
In this article, instead of $L^p$ Poincar\'{e} inequalities \eqref{eq:Pp} for {\em all} balls of $M$, we will consider the following assumption that $L^p$ Poincar\'{e} inequalities hold only for certain balls:

\begin{definition}
{\em 

We say that the $L^p$ Poincar\'{e} inequality holds {\em in the ends} of $M$ if, for every {\em admissible} balls $B$,

\begin{equation}\label{eq:PEp}\tag{$\mathrm{P}^E_p$}
||f-f_B||_{L^p(B)}\lesssim r||\nabla f||_{L^p(B)},\quad f\in C^\infty(B)
\end{equation}
where $r$ stands for the radius of $B$.

}
\end{definition}
For $p=2$, an assumption similar to \eqref{eq:PEp} has been considered in \cite{J}. It follows from \cite[Theorem 2.1]{HK} and the H\"older inequality that if $p\leq q$, \eqref{eq:PEp} $\Rightarrow$ ($\mathrm{P}^E_q$). See also the beginning of \cite[Section 4]{HK}. 
Let us also point out that if the Ricci curvature has a quadratic lower bound of the form:

\begin{equation} \label{QD} \tag{QD}
\mathrm{Ric}_x\geq -\frac{g}{1+r(x)^2},
\end{equation}
where $g$ is the Riemannian metric on $M$, then \eqref{eq:PEp} holds for all $p\geq1$ (this follows from \cite[Theorem 5.6.5]{Saloffbook}). In particular, \eqref{eq:PEp} holds for all $p\geq1$ in the case where $M$ is the connected sum of two copies of $\R^n$. \par
\noindent Before stating our main theorem, we need to introduce the heat kernel $p_t(x,y)$, which is the kernel of the heat semigroup $e^{-t\Delta}$. Say that $p_t$ satisfies pointwise Gaussian upper bounds if
\begin{equation}\label{eq:UE}\tag{UE}
p_t(x,y)\lesssim \frac{1}{V(x,\sqrt{t})}\exp\left(-\frac{d^2(x,y)}{ct}\right),\quad \forall t>0,\,\forall x,y\in M.
\end{equation}
It is well-known (see \cite[Theorem 4]{Da}) that \eqref{eq:UE} implies analogous estimates for the time-derivatives $\frac{\partial^n}{\partial t^n}$: for every $n\in\N$,

\begin{equation}\label{eq:dUE}
\left\vert \frac{\partial^n}{\partial t^n}p_t(x,y)\right\vert\lesssim \frac{1}{t^nV(x,\sqrt{t})}\exp\left(-\frac{d^2(x,y)}{ct}\right),\quad \forall t>0,\,\forall x,y\in M.
\end{equation}
Sometimes we will use a slightly different (but equivalent, under \eqref{eq:DV}) version of \eqref{eq:dUE}, which we record here:

\begin{equation}\label{eq:dUE2}
\left\vert \frac{\partial^n}{\partial t^n}p_t(x,y)\right\vert\lesssim \frac{1}{t^nV(y,\sqrt{t})}\exp\left(-\frac{d^2(x,y)}{ct}\right),\quad \forall t>0,\,\forall x,y\in M.
\end{equation}
(the constants that we call $c$ in \eqref{eq:dUE} and \eqref{eq:dUE2} not necessarily being the same). \par
\noindent Say that $M$ has a finite number of ends if there exists an integer $N\ge 1$ such that, for all $R>0$, $M\setminus B(o,R)$ has at most $N$ unbounded connected components. It is known (\cite[Section[2.4.1]{C}) that condition \eqref{eq:DV} implies that $M$ has a finite number of ends. \par
\noindent We also consider the following geometric condition. 
\begin{definition}
{\em 

We say that $(M,g)$ with a finite number of ends satisfies the Relative Connectedness in the Ends (RCE) condition, if there is a constant $\theta\in (0,1)$ such that for any point $x$ with $r(x)\geq1$, there is a continuous path $c:[0,1]\to M$ satisfying

\begin{itemize}

\item $c(0)=x$.

\item the length of $c$ is bounded by $\frac{r(x)}{\theta}$.

\item $c([0,1])\subset B(o,\theta^{-1}r(x))\setminus B(o,\theta r(x))$.

\item there is a geodesic ray $\gamma:[0,+\infty)\to M\setminus B(o,r(x))$ with $\gamma(0)=c(1)$.

\end{itemize}

}
\end{definition}
When $M$ only has one end, the (RCE) condition is nothing but the (RCA) condition introduced in \cite{GS}. Let us also recall (\cite[Theorem 2.4]{C}) that, if \eqref{QD} and (RCE) hold, as well as the volume comparison property, namely

\begin{equation}\label{eq:VC}\tag{VC}
V(o,R)\lesssim V\left(x,\frac R2\right)
\end{equation}
for all $R\ge 1$ and all $x\in \partial B(o,R)$, then the relative Faber-Krahn inequality holds, hence \eqref{eq:UE} and \eqref{eq:DV} hold. 
  
\noindent The main purpose of this article is to show the following result:

\begin{theorem}\label{thm:main}

Let $M$ be a complete Riemannian manifold satisfying \eqref{eq:DV}, \eqref{eq:UE}, \eqref{eq:RDV} for some $\nu>1$ and {\em (RCE)}. Assume that for some $q\in (1,2]$ such that $q<\nu$, the $L^q$ Poincar\'e inequalities in the ends {\em ($\mathrm{P}_q^E$)} hold. Then, for every $p\in [q,2)$, \eqref{eq:RRp} holds on $M$. 

\end{theorem}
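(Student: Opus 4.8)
The plan is to prove a weak-type $(q_0,q_0)$ bound for $\Delta^{1/2}$ relative to the gradient for some $q_0\in(1,q]$, and interpolate it with the endpoint identity $\|\Delta^{1/2}f\|_2=\|\nabla f\|_2$ to obtain \eqref{eq:RRp} for all $p\in(q_0,2)$; to reach the \emph{closed} interval $[q,2)$ one first invokes the Keith--Zhong self-improvement, which upgrades $(\mathrm{P}^E_q)$ to $(\mathrm{P}^E_{q_0})$ for some $q_0\in(1,q)$ (still $q_0<\nu$), and runs the argument at $q_0$. For the weak-type bound I would use a Calder\'on--Zygmund decomposition of $f$ at height $\lambda$ adapted to $W^{1,q_0}$, in the spirit of Auscher: $f=g+\sum_i b_i$, where the $b_i$ are supported in Whitney balls $Q_i=B(x_i,r_i)$ of $\Omega:=\{\mathcal{M}(|\nabla f|^{q_0})>\lambda^{q_0}\}$, with $\|\nabla g\|_\infty\lesssim\lambda$, $\|\nabla g\|_{q_0}\lesssim\|\nabla f\|_{q_0}$, $\|\nabla b_i\|_{q_0}\lesssim\lambda V(Q_i)^{1/q_0}$, $\sum_i\mathbf{1}_{2Q_i}\lesssim1$ and $\sum_i V(Q_i)\lesssim\lambda^{-q_0}\|\nabla f\|_{q_0}^{q_0}$. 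The good part is treated in $L^2$: $\|\Delta^{1/2}g\|_2^2=\|\nabla g\|_2^2\le\|\nabla g\|_\infty^{2-q_0}\|\nabla g\|_{q_0}^{q_0}\lesssim\lambda^{2-q_0}\|\nabla f\|_{q_0}^{q_0}$, whence $\mu(\{|\Delta^{1/2}g|>\lambda\})\lesssim\lambda^{-q_0}\|\nabla f\|_{q_0}^{q_0}$ by Chebyshev. For the bad part the crux is the estimate
\[
\int_{M\setminus 2Q_i}|\Delta^{1/2}b_i|\,d\mu\lesssim\lambda\,V(Q_i)\qquad(\text{for each }i),
\]
after which summing over $i$ and applying Chebyshev off $\bigcup_i 2Q_i$ (whose measure is $\lesssim\lambda^{-q_0}\|\nabla f\|_{q_0}^{q_0}$) finishes the proof.

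\textbf{The bad part and the role of the Hardy inequality.} Writing $\Delta^{1/2}=c\int_0^\infty(I-e^{-t\Delta})\,t^{-3/2}\,dt$ and using that $b_i$ vanishes on $M\setminus 2Q_i$, the displayed estimate reduces to $\int_0^\infty t^{-3/2}\int_{M\setminus 2Q_i}|e^{-t\Delta}b_i|\,d\mu\,dt\lesssim\lambda V(Q_i)$. Splitting at $t=r_i^2$: for $t\ge r_i^2$ one bounds $\int_{M\setminus 2Q_i}|e^{-t\Delta}b_i|\le\|b_i\|_1$ and integrates $t^{-3/2}$; for $t\le r_i^2$ one splits $M\setminus 2Q_i$ into dyadic annuli about $Q_i$ and uses the $L^{q_0}$ Davies--Gaffney off-diagonal estimates for $e^{-t\Delta}$ (which follow from \eqref{eq:UE} and \eqref{eq:DV}) together with \eqref{eq:VD} to sum the annular terms against the Gaussian factor $\exp(-c4^jr_i^2/t)$; the $t$-integrals converge and produce factors $\lesssim r_i^{-1}$. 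Everything thus reduces to the single inequality $\|b_i\|_{L^{q_0}(Q_i)}\lesssim\lambda\,r_i\,V(Q_i)^{1/q_0}$, and here a dichotomy appears. If $Q_i$ is \emph{remote}, hence admissible, then $(\mathrm{P}^E_{q_0})$ holds on $Q_i$ and gives $\|b_i\|_{L^{q_0}(Q_i)}\le\|f-f_{Q_i}\|_{L^{q_0}(Q_i)}\lesssim r_i\|\nabla f\|_{L^{q_0}(Q_i)}\lesssim\lambda r_i V(Q_i)^{1/q_0}$. If $Q_i$ is \emph{not} remote, then $r_i\gtrsim r(x_i)$, so $Q_i\subset B(o,Cr_i)$ and $1+r(\cdot)\lesssim r_i$ on $Q_i$ (when $r_i\gtrsim r_0$): here $(\mathrm{P}^E_{q_0})$ is unavailable on these ``fat'' balls straddling the ends --- indeed an ordinary Poincar\'e inequality genuinely fails on them, as one sees on a connected sum --- and one applies instead a \emph{Hardy inequality}
\[
\int_M\frac{|w|^{q_0}}{(1+r(x))^{q_0}}\,d\mu(x)\;\lesssim\;\int_M|\nabla w|^{q_0}\,d\mu,\qquad w\in C_0^\infty(M),
\]
to $w=b_i$, again obtaining $\|b_i\|_{L^{q_0}(Q_i)}\lesssim r_i\|\nabla b_i\|_{q_0}\lesssim\lambda r_i V(Q_i)^{1/q_0}$. (Fat Whitney balls of bounded radius lie in the compact region $B(o,Cr_0)$ and are handled by interior Poincar\'e.) The same dichotomy is needed to construct the decomposition at all: Auscher's construction requires a Poincar\'e estimate on each Whitney ball, which on the non-remote ones is supplied by the Hardy inequality.

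\textbf{The Hardy inequality (the main obstacle).} It remains to establish the Hardy inequality above for each $q_0\in(1,\nu)$; this is where \eqref{eq:UE}, \eqref{eq:RDV} with $\nu>1$ and (RCE) enter. The hypotheses $\nu>1$ and $q_0<\nu$ force each end of $M$ to have volume growth $\gtrsim r^\nu$ with $\nu>q_0$ (so a local $q_0$-Sobolev inequality holds near each end and the weight $(1+r)^{-q_0}$ is of the right order there) and give $q_0+\nu>2$, i.e. $q_0$-nonparabolicity of $M$, which is precisely what precludes a logarithmic loss in the weight. I would prove it by a ground-state substitution for the $q_0$-Laplacian: construct a positive function $h$ on $M$, comparable across the (finitely many) ends, which is $q_0$-superharmonic outside $B_0$ and satisfies $|\nabla h|/h\simeq 1/r$ near infinity --- for instance an appropriate power of the $q_0$-capacitary (Green-type) potential of $B_0$, whose decay $\simeq V(o,r(\cdot))^{-1/(q_0-1)}$ is controlled via \eqref{eq:UE} and \eqref{eq:RDV} --- and then use $\int_M|\nabla w|^{q_0}\,d\mu\gtrsim c_{q_0}\int_M(|\nabla h|/h)^{q_0}|w|^{q_0}\,d\mu$ (up to lower-order terms on $B_0$ absorbed by local Sobolev). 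Here (RCE) is the crucial geometric input: it supplies, for every faraway point, a controlled path back into a fixed annulus and a geodesic ray to infinity, so that the capacitary potentials of the different ends can be made to match up and $h$ carries uniform two-sided bounds globally. I expect the construction and gluing of $h$, and the verification of its $q_0$-superharmonicity with the correct rate, to be the hard part; by contrast the Calder\'on--Zygmund argument above is routine once the Hardy inequality and $(\mathrm{P}^E_{q_0})$ are in hand.
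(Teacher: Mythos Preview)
Your ingredients are right---localized Calder\'on--Zygmund for gradients, Gaussian off-diagonal bounds, and a Hardy inequality---but the assembly has a genuine gap. You run a \emph{single global} CZ decomposition of $f$, and then propose to handle those Whitney balls $Q_i$ that happen to be non-remote by invoking Hardy. The problem is that the very construction of the decomposition with $\|\nabla b_i\|_{q_0}\lesssim\lambda V(Q_i)^{1/q_0}$ already requires a Poincar\'e inequality on each $Q_i$: writing $b_i=(f-f_{Q_i})\chi_i$, the term $(f-f_{Q_i})\nabla\chi_i$ is controlled only via $\|f-f_{Q_i}\|_{L^{q_0}(Q_i)}\lesssim r_i\|\nabla f\|_{L^{q_0}(Q_i)}$. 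On a non-remote $Q_i$ this is unavailable, and your proposed substitute is circular: Hardy applied to $b_i$ gives $\|b_i\|_{q_0}\lesssim r_i\|\nabla b_i\|_{q_0}$, but you then quote the bound $\|\nabla b_i\|_{q_0}\lesssim\lambda V(Q_i)^{1/q_0}$, which is precisely the estimate whose proof needed Poincar\'e. If instead you set $b_i=f\chi_i$ on the non-remote balls, then $\|\nabla b_i\|_{q_0}$ involves $r_i^{-1}\|f\|_{L^{q_0}(Q_i)}$, and Hardy only bounds this by the \emph{global} $\|\nabla f\|_{q_0}$, not by $\lambda V(Q_i)^{1/q_0}$; summing such contributions over $i$ does not yield the weak-type bound. (There is no uniform control on the number of non-remote Whitney balls as $\lambda\to0$.)

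The paper resolves this by reversing the order of the two localizations. One first fixes a covering of $M$ by \emph{admissible} balls $(B_\alpha)$ with partition of unity $(\chi_\alpha)$ and writes $f=\sum_\alpha f_\alpha$ with $f_\alpha=\chi_\alpha f$. Each $f_\alpha$ is supported in a ball whose dilates are remote (for $\alpha\neq0$), so every Whitney ball arising in the CZ decomposition of $f_\alpha$ genuinely supports $(\mathrm{P}^E_q)$; this gives the ``diagonal'' bound $\|\Delta^{1/2}f_\alpha\|_{L^p(4B_\alpha)}\lesssim\|\nabla f_\alpha\|_p$ exactly as in Auscher--Coulhon. The off-diagonal pieces $\|\Delta^{1/2}f_\alpha\|_{L^p(M\setminus 4B_\alpha)}$ are controlled by $r_\alpha^{-1}\|f_\alpha\|_p$ via Gaussian estimates and Littlewood--Paley--Stein (no Poincar\'e needed there). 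The Hardy inequality is used \emph{only once, at the very end}, to sum the error terms $\sum_\alpha\|f_\alpha/r_\alpha\|_p\lesssim\|f/(1+r)\|_p\lesssim\|\nabla f\|_p$ coming from the cutoffs $\nabla\chi_\alpha$. In short: Hardy handles the partition of unity, not the CZ decomposition.

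Two smaller points. First, your appeal to Keith--Zhong self-improvement for $(\mathrm{P}^E_q)$ is not automatic: that theorem concerns Poincar\'e on all balls, and its applicability to the restricted family of admissible balls would need justification. Second, the paper obtains the Hardy inequality not by a ground-state/$p$-superharmonic construction but by quoting Minerbe's discretization: (RCE) gives (RCA) in each end, one builds a good covering and a weighted graph, and the $L^{q_0}$ Poincar\'e on annular pieces plus a graph isoperimetric inequality yield the global Hardy inequality. Your proposed $p$-Laplacian route may well work, but it is a separate project.
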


\begin{question}
{\em
Is the assumption (RCE) in Theorem \ref{thm:main} really necessary?
}
\end{question}

\begin{remark}
{\em 

Let us compare Theorem \ref{thm:main} with \cite[Theorem 0.7]{AC}. Assume that \eqref{eq:DV}, ($\mathrm{P}_q$) for some $q\in [1,2]$ and \eqref{eq:RDV} for some $\nu>q$ hold. Then ($\mathrm{P}_2$) holds as well; together with \eqref{eq:DV}, it follows that \eqref{eq:UE} holds (see \cite[Theorem 4.2.6]{Saloffbook}). Moreover, \cite[Proposition 0.3]{Min} shows that the conjunction of \eqref{eq:DV}, ($\mathrm{P}_q$) and \eqref{eq:RDV} for some $\nu>q$ imply the (RCA) condition. Since it is clear that ($\mathrm{P}_q$) $\Rightarrow$ ($\mathrm{P}^E_q$), it follows that for every $p\in (q,2)$, \eqref{eq:RRp} holds on $M$. In other words, under the condition \eqref{eq:RDV} for some $\nu>q$, assumptions in Theorem \ref{thm:main} are weaker than those of \cite[Theorem 0.7]{AC}. \par
\noindent Note also that, in Theorem \ref{thm:main}, the conclusion \eqref{eq:RRp} holds for all $p\in [q,2)$, while the corresponding conclusion in \cite[Theorem 0.7]{AC} under the assumption that  ($\mathrm{P}_q$) holds, is only stated for $p\in (q,2)$ (actually, a weak form of \eqref{eq:RRp} is proved for $p=q$ is proved in \cite[Section 1.2]{AC}). However, when ($\mathrm{P}_q$) holds, there exists $\varepsilon>0$ such that ($\mathrm{P}_{q-\varepsilon}$) is also satisfied (\cite[Theorem 1.0.1]{K}), so that \cite[Theorem 0.7]{AC} yields \eqref{eq:RRp} for $p=q$. 

}
\end{remark}

\begin{Cor}\label{cor:main}

Let $M$ be a complete Riemannian manifold satisfying \eqref{QD}, \eqref{eq:VC}, {\em (RCE)} and \eqref{eq:RDV} with $\nu>1$. Then, for every $p\in (1,+\infty)$, \eqref{eq:RRp} holds on $M$.

\end{Cor}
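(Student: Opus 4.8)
The plan is to deduce Corollary \ref{cor:main} directly from Theorem \ref{thm:main} by verifying its hypotheses. So I must produce, from \eqref{QD}, \eqref{eq:VC}, (RCE) and \eqref{eq:RDV} with $\nu>1$, the four assumptions \eqref{eq:DV}, \eqref{eq:UE}, \eqref{eq:RDV} (this one is given), and (RCE) (also given), together with a Poincar\'e inequality in the ends $(\mathrm{P}^E_q)$ for some $q\in(1,2]$ with $q<\nu$.

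First I would invoke the result quoted just before the statement of Theorem \ref{thm:main}, namely \cite[Theorem 2.4]{C}: under \eqref{QD}, (RCE) and the volume comparison property \eqref{eq:VC}, the relative Faber-Krahn inequality holds, and hence both \eqref{eq:UE} and \eqref{eq:DV} hold. This immediately disposes of two of the four conditions. Then I would recall that \eqref{QD}, a quadratic lower bound on the Ricci curvature, gives, via \cite[Theorem 5.6.5]{Saloffbook} (as already noted in the excerpt), the Poincar\'e inequality in the ends $(\mathrm{P}^E_p)$ for \emph{every} $p\ge 1$; in particular it holds for $p=q$ for any $q$ we wish to choose.

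The only remaining point is the interplay of the exponents: Theorem \ref{thm:main} requires a $q\in(1,2]$ with $q<\nu$ and $(\mathrm{P}^E_q)$, and then concludes \eqref{eq:RRp} for all $p\in[q,2)$. Since $\nu>1$, I can pick $q$ with $1<q<\min(\nu,2)$ arbitrarily close to $1$; by the preceding paragraph $(\mathrm{P}^E_q)$ holds for this $q$, so Theorem \ref{thm:main} yields \eqref{eq:RRp} for all $p\in[q,2)$, hence for all $p\in(1,2)$ by letting $q\downarrow 1$ (each fixed $p\in(1,2)$ lies in $[q,2)$ once $q<p$). This gives \eqref{eq:RRp} for $p\in(1,2]$, the endpoint $p=2$ being automatic from the identity $\|\nabla u\|_2=\|\Delta^{1/2}u\|_2$ recalled in the introduction. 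Finally, for $p\in(2,\infty)$ one uses the duality argument of Coulhon--Duong (originally from \cite{B}): since \eqref{eq:UE} and \eqref{eq:DV} hold, the Riesz transform $\mathscr{R}=\nabla\Delta^{-1/2}$ is bounded on $L^{p'}$ for all $p'\in(1,2)$ (this is the Coulhon--Duong theorem, i.e. $(\mathrm{R}_{p'})$ holds for $p'\in(1,2]$ under \eqref{eq:DV} and \eqref{eq:UE}), and $(\mathrm{R}_{p'})$ implies $(\mathrm{RR}_p)$ for the conjugate exponent $p$; this covers $p\in[2,\infty)$ and in particular all $p\in(2,\infty)$.

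The main obstacle, such as it is, is bookkeeping: making sure the exponent $q$ can be chosen below both $\nu$ and $2$ while still being $>1$ (which is exactly why the hypothesis $\nu>1$ is imposed), and being careful that Theorem \ref{thm:main} only gives the range $[q,2)$, so that the full range $(1,2)$ is recovered by varying $q$, and the ranges $\{2\}$ and $(2,\infty)$ are handled separately by the $L^2$ identity and by duality with the Coulhon--Duong Riesz transform bounds, respectively. No genuinely new estimate is needed; everything reduces to correctly assembling the cited ingredients.
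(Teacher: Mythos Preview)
Your proposal is correct and follows essentially the same route as the paper: verify \eqref{eq:DV} and \eqref{eq:UE} via \cite[Theorem 2.4]{C} from \eqref{QD}, \eqref{eq:VC} and (RCE); obtain $(\mathrm{P}^E_q)$ from \eqref{QD} via \cite[Theorem 5.6.5]{Saloffbook}; apply Theorem \ref{thm:main} for $p\in(1,2)$; and conclude for $p\ge 2$ by Coulhon--Duong \cite{CD} and the duality implication $(\mathrm{R}_{p'})\Rightarrow(\mathrm{RR}_p)$. If anything, your handling of the exponent is slightly more careful than the paper's, which simply says ``with $q=1$'' even though Theorem \ref{thm:main} is stated for $q\in(1,2]$; your device of choosing $q\in(1,\min(\nu,2))$ close to $1$ makes this precise.
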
 

\begin{proof}

We are going to show that the assumptions of Theorem \ref{thm:main} are satisfied with $q=1$. The $L^1$ Poincar\'e inequality in the ends follows from \cite[Theorem 5.6.5]{Saloffbook}. Now, as we have mentioned before, \eqref{QD}, \eqref{eq:VC} and (RCE) imply \eqref{eq:DV} and \eqref{eq:UE}. Thus, the assumptions of Theorem \ref{thm:main} are satisfied, and therefore we get \eqref{eq:RRp} for all $p\in (1,2)$. The reverse inequalities \eqref{eq:RRp} for $p\in [2,+\infty)$ follow from \cite{CD} and the implication \eqref{eq:Rp} $\Rightarrow$ ($\mathrm{RR}_q$), $q=p'$.

\end{proof}
\noindent The proof of Theorem \ref{thm:main} relies on three major ingredients: the first one (Proposition \ref{prop:goodcovering} below) is the covering of $M$ by admissible balls $(B_{\alpha})_{\alpha\in \N}$ and the existence of an associated smooth partition of unity $(\chi_{\alpha})_{\alpha\in \N}$. The second one (Theorem \ref{thm:hardy} below) is an $L^p$ Hardy inequality on $M$, obtained (roughly speaking) by ``gluing'' together local Poincar\'e inequalities thanks to a suitable covering. Our approach also uses a localized version of the Calder\'on-Zygmund decomposition in Sobolev spaces as in \cite{AC}, already encountered in \cite{DR} (see Lemma \ref{lem:CZ} below).\par

\noindent The structure of the paper is as follows. Hardy inequalities are proved in Section \ref{sec:Hardy}. We then turn to the proof of Theorem \ref{thm:main} in Section \ref{sec:proofmainalt}. An appendix is devoted to the clarification of some properties of the Calder\'on-Zygmund decomposition. 
\section{Hardy inequalities}\label{sec:Hardy}
Before stating the Hardy inequalities required for the proof of Theorem \ref{thm:main} and for the convenience of the reader, we feel it is worthwile to write down a more self-contained proof of the $L^p$ Hardy inequality in the case where $M$ is a connected sum of the Euclidean spaces of dimension $\geq 2$. It is well-known that on $\R^n$ the following optimal Hardy inequality holds:

\begin{equation}\label{eq:HardyR}\tag{$H_{\R^n}$}
\left(\frac{n-p}{p}\right)^{p}\int_{\R^n}\frac{|f|^p}{r^p}\leq \int_M|df|^p,\quad \forall f\in C^\infty(\R^n).
\end{equation}
Hence, the Hardy inequality on a connected sum of two Euclidean spaces follows from the following result, which we think is of interest by itself:

\begin{Pro}

Let $M$ and $N$ be two Riemannian manifolds, such that $M$ and $N$ are isometric at infinity: there exists $K_M\Subset M$, $K_N\Subset N$ compact sets such that $M\setminus K_M$ is isometric to $N\setminus K_N$. Let $p\in (1,\infty)$. Then, the Hardy inequality
\begin{equation}\label{eq:Hardy}\tag{H}
\int_M\left(\frac{|f|}{r+1}\right)^p\lesssim \int_M|df|^p,\quad \forall f\in C^\infty_0(M).
\end{equation}
holds on $M$, if and only if it holds on $N$.

\end{Pro}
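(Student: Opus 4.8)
\emph{The plan.} Since \eqref{eq:Hardy} is symmetric in $M$ and $N$, it is enough to prove that \eqref{eq:Hardy} on $N$ implies \eqref{eq:Hardy} on $M$; I will describe the argument for $M$ and $N$ connected, the general case (needed for connected sums) being the same applied to each component of $N$. I would first fix the isometry $\Phi\colon M\setminus K_M\to N\setminus K_N$ and, after enlarging $K_M,K_N$ to smooth compact domains containing basepoints $o\in K_M$ and $o_N\in K_N$, record that for $x\in M\setminus K_M$ a shortest path to $\partial K_M$ stays outside $K_M$, whence $d_M(x,\partial K_M)=d_N(\Phi(x),\partial K_N)$; together with $d_M(\cdot,o)+1\simeq d_M(\cdot,\partial K_M)+1$ and its analogue on $N$, this gives $r(x)+1\simeq d_N(\Phi(x),o_N)+1$ on $M\setminus K_M$. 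I would also fix $\chi\in C_0^\infty(M)$ with $\chi\equiv1$ on a neighbourhood of $K_M$ and $S:=\supp(d\chi)$ a compact subset of $M\setminus K_M$.

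\emph{Transplantation.} Given $f\in C_0^\infty(M)$, write $f=\chi f+(1-\chi)f$. The second piece lives in $M\setminus K_M$ and vanishes near $\partial K_M$, so $g:=\Phi_*\big((1-\chi)f\big)$ extends by $0$ to an element of $C_0^\infty(N)$; applying \eqref{eq:Hardy} on $N$ to $g$, using the distance comparison and $|d((1-\chi)f)|\lesssim|df|+|d\chi||f|$, I obtain $\int_M(|(1-\chi)f|/(r+1))^p\lesssim\int_N|dg|^p\lesssim\int_M|df|^p+\int_S|f|^p$. For the first piece, $\chi f$ is supported in the fixed compact set $\supp\chi$, where $r+1\simeq1$, so the Poincar\'e (Friedrichs) inequality on a bounded open $U\supset\supp\chi$ gives $\int_M(|\chi f|/(r+1))^p\lesssim\int_U|\chi f|^p\lesssim\int_U|d(\chi f)|^p\lesssim\int_M|df|^p+\int_S|f|^p$. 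Adding these yields, for every $f\in C_0^\infty(M)$,
\[
\int_M\Big(\frac{|f|}{r+1}\Big)^p\ \lesssim\ \int_M|df|^p+\int_S|f|^p.\qquad(\star)
\]

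\emph{Removing the compact remainder.} The delicate point is that $\int_S|f|^p$ is \emph{not} controlled by $\int_M|df|^p$ on a general $M$ — this fails precisely when $M$ is $p$-parabolic — so $(\star)$ cannot be closed by a direct absorption, and I would instead argue by contradiction. If \eqref{eq:Hardy} fails on $M$, pick $f_k\in C_0^\infty(M)$ with $\int_M(|f_k|/(r+1))^p=1$ and $\int_M|df_k|^p\to0$; then $(\star)$ forces $\liminf_k\int_S|f_k|^p>0$. Since $\int_{B(o,R)}|f_k|^p\le(R+1)^p$, the $f_k$ are bounded in $W^{1,p}_{\loc}(M)$, so by Rellich's theorem and a diagonal extraction a subsequence converges in $L^p_{\loc}(M)$ to a function with zero differential, i.e.\ to a constant $c$, and $c\ne0$ because $\int_S|f_k|^p\to|c|^p\mu(S)$. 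Rescaling by $c$ produces a sequence in $C_0^\infty(M)$ converging to $1$ in $L^p_{\loc}(M)$ with $L^p$-gradients going to $0$: hence $M$ is $p$-parabolic. As $p$-parabolicity is preserved under isometry at infinity (transport a parabolicity sequence from $M$ to $N$, correcting it near $K_M$ just as in the transplantation step so that it extends smoothly over $K_N$), $N$ is $p$-parabolic as well; but then \eqref{eq:Hardy} cannot hold on $N$, since for a parabolicity sequence $\phi_k\in C_0^\infty(N)$ ($0\le\phi_k\le1$, $\phi_k\to1$ locally, $\int_N|d\phi_k|^p\to0$) Fatou gives $\liminf_k\int_N(\phi_k/(d_N(\cdot,o_N)+1))^p\ge\int_N(d_N(\cdot,o_N)+1)^{-p}>0$ while \eqref{eq:Hardy} would force this quantity to tend to $0$. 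This contradicts the hypothesis, so \eqref{eq:Hardy} holds on $M$.

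\emph{Expected main obstacle.} The transplantation is routine; the real work is the compact remainder $\int_S|f|^p$ in $(\star)$, and I expect the decisive step to be the $p$-parabolicity dichotomy of the last paragraph — it converts ``the remainder is the genuine obstruction to \eqref{eq:Hardy} on $M$'' into ``$M$, and therefore $N$, is $p$-parabolic'', which is incompatible with \eqref{eq:Hardy} on $N$. An alternative route, which I would consider if the compactness argument became awkward (e.g.\ in the disconnected case), is to use the characterisation of \eqref{eq:Hardy} via the existence of a positive supersolution of $-\Delta_p w\gtrsim(1+r)^{-p}w^{p-1}$: transplant such a $w$ from $N$ to $M\setminus K_M$ and glue it, via a minimum, with a local supersolution near $K_M$ furnished by a local Poincar\'e inequality.
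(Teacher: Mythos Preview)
Your proof is correct, and its first half (the cutoff decomposition $f=\chi f+(1-\chi)f$, transplantation of $(1-\chi)f$ to $N$, and the resulting estimate $(\star)$ with a compact remainder) is essentially identical to the paper's argument; you are even a bit more careful than the paper in justifying that the weights $(1+r)^{-1}$ on $M$ and $N$ are comparable under the isometry. The difference lies in how the remainder $\int_S|f|^p$ is eliminated. The paper argues \emph{directly}: \eqref{eq:Hardy} on $N$ forces $N$ to be $p$-hyperbolic, $p$-hyperbolicity passes to $M$ through the ends, and $p$-hyperbolicity of $M$ is exactly the statement that $\int_K|u|^p\le C_K\int_M|du|^p$ for every compact $K$ (the paper cites \cite{BD} for these facts). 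You run the \emph{contrapositive}: if \eqref{eq:Hardy} failed on $M$, a normalized violating sequence together with $(\star)$ and Rellich compactness produces a $p$-parabolicity sequence on $M$, which transfers to $N$ and contradicts \eqref{eq:Hardy} there. The two routes are logically equivalent manifestations of the same $p$-hyperbolic/$p$-parabolic dichotomy; the paper's is shorter because it outsources the potential-theoretic facts, while yours is more self-contained but requires the compactness/extraction step (and a tacit truncation to turn $f_k/c$ into a genuine parabolicity sequence with values in $[0,1]$).
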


\begin{Cor}

Let $M=\R^n\sharp \R^n$ be a connected sum of two Euclidean spaces of dimension $n\geq 2$. Let $1\leq p<n$. Then, $M$ satisfies the $L^p$ Hardy inequality \eqref{eq:Hardy}.

\end{Cor}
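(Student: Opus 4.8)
The plan is to deduce the corollary directly from the Proposition above together with the optimal Euclidean Hardy inequality \eqref{eq:HardyR}, so that essentially no new analysis is required.

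First I would record the key geometric fact: by the very construction of the connected sum, there is a compact set $K_M \Subset M$ and a radius $R>0$ such that $M\setminus K_M$ is isometric to the disjoint union of two copies of $\R^n\setminus\overline{B}(0,R)$. This is because the connected sum modifies the two Euclidean metrics only on a compact collar, leaving the ``outer'' parts literally Euclidean. Consequently, setting $N:=\R^n\sqcup\R^n$ and $K_N:=\overline{B}(0,R)\sqcup\overline{B}(0,R)\Subset N$, the manifolds $M$ and $N$ are isometric at infinity in the sense of the Proposition.

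Next I would invoke the Proposition to reduce the claim to the validity of \eqref{eq:Hardy} on $N=\R^n\sqcup\R^n$. Since a test function $f\in C_0^\infty(N)$ splits as $f=f_1+f_2$ with $f_i$ supported in the $i$-th copy, and since both sides of \eqref{eq:Hardy} are additive over the two components, it suffices to establish \eqref{eq:Hardy} on a single copy of $\R^n$. There one identifies $r$ with the Euclidean distance $\lvert x\rvert$ to the origin, bounds the weight crudely by $\tfrac{1}{r+1}\le\tfrac{1}{\lvert x\rvert}$, and applies \eqref{eq:HardyR}; the hypothesis of \eqref{eq:HardyR} is precisely $1\le p<n$, which is exactly the range in the statement and explains why this restriction appears.

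As for difficulties, there is no serious obstacle here, the substance having been packaged into the Proposition and into $\eqref{eq:HardyR}$. The only two points that deserve a little care are: (i) checking that ``connected sum'' genuinely produces an \emph{isometry} (not merely a quasi-isometry or diffeomorphism) of the complements of compact sets, which is immediate from the standard construction since the metric is interpolated only on a compact piece; and (ii) a mild bookkeeping point, namely that on a manifold with more than one end the symbol $r$ occurring in \eqref{eq:Hardy} should be read as $1+d(\cdot,K)$ for a compact core $K$ (equivalently, one base point per end), the different choices yielding comparable weights, so that the transfer of \eqref{eq:Hardy} between $M$ and $N$ through the Proposition is legitimate.
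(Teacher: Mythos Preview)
Your proposal is correct and is exactly the argument the paper has in mind: the Corollary is stated immediately after the Proposition and is not given a separate proof, since it is meant to follow at once from the Proposition together with the Euclidean Hardy inequality \eqref{eq:HardyR}. Your remark about interpreting $r$ on the disconnected model $N=\R^n\sqcup\R^n$ via one base point per component (equivalently, distance to a compact core) is the right way to make the application of the Proposition precise.
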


\begin{proof} (of the proposition)

Assume that \eqref{eq:Hardy} holds on $N$. We are going to show that it holds on $M$ as well. By assumption, there exists two relatively compact, open sets $U\subset M$, $V\subset N$ such that $M\setminus U$ is isometric to $N\setminus V$. Let $0\leq \chi\leq 1$ be a smooth, compactly supported function on $M$, which is equal to $1$ identically in an neighborhood of $U$. Let $K\Subset M$ be a compact set containing the support of $\chi$. Let us take $f\in C^\infty_0(M)$, and write $f=\chi f+(1-\chi)f$. The function $(1-\chi)f$ identifies naturally with a smooth, compactly supported function defined on $N\setminus V$, hence the Hardy inequality on $N$ yields:

$$\int_{N}\left(\frac{|(1-\chi)f|}{1+r}\right)^p \lesssim \int_{N}|d\left((1-\chi)f\right)|^p.$$
Since $d\left((1-\chi)f\right)=-(d\chi)f+(1-\chi)df$, upon using the elementary inequality $(a+b)^p\leq 2^{p-1}(a^p+b^p)$ one gets:

\begin{eqnarray*}
\int_{N}\left(\frac{|(1-\chi)f|}{1+r}\right)^p &\lesssim &\int_{N}|d\chi|^p |f|^p+\int_{N}|1-\chi|^p|df|^p\\
&\lesssim &\int_K|f|^p+\int_M|df|^p
\end{eqnarray*}
On the other hand, one clearly has

$$\int_M \left(\frac{|\chi f|}{1+r}\right)^p\lesssim \int_K|f|^p,$$
so that, finally, one arrives to

\begin{eqnarray*}
\int_M \left(\frac{|f|}{1+r}\right)^p&\leq & 2^{p-1}\int_M \left(\frac{|\chi f|}{1+r}\right)^p+2^{p-1}\int_{M}\frac{|(1-\chi)f|^p}{r^p}\\
&\lesssim & \int_K|f|^p+\int_M|df|^p
\end{eqnarray*}
Now, the assumed Hardy inequality on $N$ implies that $N$ is $p$-hyperbolic (see \cite[Prop. 2.2]{BD}), and since $M$ and $N$ are isometric at infinity, it follows that the ends of $M$ are $p$-hyperbolic, hence $M$ itself is $p$-hyperbolic. For details, see \cite[Section 2]{BD}. Therefore, there exists a constant $C_K$ such that for every $u\in C_0^\infty(M)$,

$$\int_K|u|^p\leq C_K\int_M|du|^p.$$
Combining this inequality with the previous one, one obtains that

$$\int_M \left(\frac{|f|}{1+r}\right)^p \lesssim \int_M|df|^p,$$
which is precisely the sought for Hardy inequality \eqref{eq:Hardy} on $M$.

\end{proof}

Let us now state a more general result on $L^p$ Hardy inequalities that essentially stems from the work of V. Minerbe \cite{Min}:

\begin{Thm}\label{thm:hardy}

Let $M$ be a complete Riemannian manifold satisfying \eqref{eq:DV}, $\mathrm{(RCE})$ and \eqref{eq:RDV} for an exponent $\nu>1$. Let $1\leq p<\nu$, and assume that \eqref{eq:PEp} holds. Then the $L^p$ Hardy inequality \eqref{eq:Hardy} holds on $M$.
\end{Thm}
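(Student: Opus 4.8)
The plan is to deduce the $L^p$ Hardy inequality \eqref{eq:Hardy} on $M$ by combining a good covering of $M$ by admissible balls (remote balls in the ends plus the anchored ball $B_0$), the local Poincar\'e inequalities \eqref{eq:PEp} valid on these balls, and the reverse doubling condition \eqref{eq:RDV} with exponent $\nu>1$, in the spirit of Minerbe's weighted Sobolev/Hardy machinery in \cite{Min}. The role of (RCE) is to guarantee that such a covering has controlled combinatorics (bounded overlap and a tree-like chain structure connecting any remote ball to $B_0$ through balls of comparable scale), which is exactly the geometric input needed to patch the local estimates together. Concretely, I would first invoke the good covering result (Proposition \ref{prop:goodcovering}), giving admissible balls $(B_\alpha)$ with a subordinate partition of unity $(\chi_\alpha)$ satisfying $|\grad\chi_\alpha|\lesssim r_\alpha^{-1}$, where $r_\alpha$ is the radius of $B_\alpha$, and bounded overlap.

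The heart of the argument is a discrete Hardy inequality on the ``tree'' of balls. For $f\in C_0^\infty(M)$, write $f=\sum_\alpha \chi_\alpha f$ and estimate $\int_M (|f|/(1+r))^p$ by $\sum_\alpha \int_{B_\alpha}(|f|/(1+r))^p$, using that $1+r(x)\simeq r_\alpha$ on a remote ball $B_\alpha$ (while on $B_0$ one has $1+r\simeq 1$) and $\nu>1$ to make the geometric-type series converge. On each $B_\alpha$ one replaces $f$ by $f-f_{B_\alpha}$ at the cost of the averages $f_{B_\alpha}$, and applies \eqref{eq:PEp} to get $\int_{B_\alpha}|f-f_{B_\alpha}|^p\lesssim r_\alpha^p\int_{B_\alpha}|df|^p$. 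The remaining sum $\sum_\alpha r_\alpha^{-p}|f_{B_\alpha}|^p V(B_\alpha)$ (with the appropriate powers of $1+r$) is controlled by a telescoping/chaining argument along the paths provided by (RCE): the difference of averages over neighboring balls $B_\alpha, B_\beta$ is bounded, via Poincar\'e on a slightly dilated ball, by $r\,(V(B)^{-1}\int_B |df|^p)^{1/p}$, and summing along a chain from $B_\alpha$ back to $B_0$ together with \eqref{eq:RDV} (which forces $V(B_\alpha)/V(B_0)\gtrsim (r_\alpha/r_0)^\nu$, so the weights $r_\alpha^{-p}V(B_\alpha)$ are summable against the chain when $p<\nu$) yields $\sum_\alpha r_\alpha^{-p}|f_{B_\alpha}|^p V(B_\alpha)\lesssim \int_M |df|^p + |f_{B_0}|^p$; and $|f_{B_0}|^p\lesssim V(B_0)^{-1}\int_{B_0}|f|^p$ is absorbed. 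To close the circle one needs $M$ to be $p$-parabolic-free, i.e. the ends are $p$-hyperbolic so that the local term $\int_{B_0}|f|^p\lesssim \int_M|df|^p$; this follows because \eqref{eq:RDV} with $\nu>p$ on the ends gives $p$-hyperbolicity of each end (as in \cite{BD}), hence of $M$.

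I would organize the computation so that all the $1+r$ weights are tracked as powers of $r_\alpha$ from the outset, reducing everything to a weighted discrete inequality on the index tree, where the two ingredients are: (a) the one-step estimate $|f_{B_\alpha}-f_{B_{\mathrm{parent}(\alpha)}}|\lesssim r_\alpha (V(B_\alpha)^{-1}\int_{\widehat B_\alpha}|df|^p)^{1/p}$, and (b) the volume growth lower bound from \eqref{eq:RDV}. Summing (a) along chains and using (b) with $p<\nu$ gives the desired bound with a constant depending only on the structural constants of the covering, on $D$, $\nu$, $p$, and on the Poincar\'e constant in \eqref{eq:PEp}.

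The main obstacle I expect is the chaining/telescoping estimate for the averages: one must pass from Poincar\'e inequalities on individual admissible balls to a global control of $f_{B_\alpha}$ in terms of $\|df\|_p$, and this requires carefully exploiting (RCE) to build chains that stay in annuli of controlled radius (so neighboring balls have comparable radii and overlapping dilates on which Poincar\'e applies), plus the reverse doubling $\nu>1$ to ensure the resulting series converges — this is precisely where Minerbe's weighted techniques from \cite{Min} are invoked, and the bookkeeping of which power of $p$ versus $\nu$ is needed (namely $p<\nu$) is delicate. The $p$-hyperbolicity step, by contrast, is essentially a citation to \cite{BD}, and the bounded-overlap/partition-of-unity manipulations are routine.
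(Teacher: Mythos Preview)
Your proposal is correct and is essentially a concrete unpacking of the paper's proof, which is a terse citation to Minerbe's machinery in \cite{Min}: the paper observes that (RCE) gives (RCA) in each end, that \eqref{eq:DV} together with \eqref{eq:PEp} yield Poincar\'e inequalities on annular pieces (\cite[Lemma~2.10]{Min}), builds a good covering and the associated weighted graph as in \cite[Section~2.3.1]{Min}, invokes \cite[Theorem~2.23]{Min} for the graph isoperimetric inequality coming from reverse doubling, and concludes via \cite[Theorem~1.8]{Min}. Your chaining/telescoping of averages along (RCE)-paths, with \eqref{eq:RDV} and $p<\nu$ ensuring convergence, is exactly the content of that graph isoperimetric step, and your local Poincar\'e estimates are what Minerbe calls ``continuous $L^p$ Sobolev inequalities of order $\infty$''. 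The only cosmetic difference is that you isolate the $B_0$-contribution and absorb it via $p$-hyperbolicity (as in the proof of Proposition~2.1 above), whereas in \cite{Min} this is folded into the discrete isoperimetric inequality; note also that Proposition~\ref{prop:goodcovering} as stated gives bounded overlap but not the chain/tree structure your telescoping needs, so you should build the covering as in \cite[Section~2.3.1]{Min} rather than quote Proposition~\ref{prop:goodcovering} directly.
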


\begin{proof}

Note first that the (RCE) assumption implies that every end of $M$ satisfies the (RCA) condition considered in \cite{Min}. Next, \eqref{eq:DV} and \eqref{eq:PEp} implies that the proof of \cite[Lemma 2.10]{Min}, which provides $L^p$ Poincar\'{e} inequalities for subset of annuli, applies {\em mutatis mutandis} in our context. Given (RCA) in each end of $M$, one can then construct a ``good covering'' of $M$ (in the sense of \cite[Definition 1.1]{Min}) for the pair of measure $(\frac{\mathrm{dvol}}{1+r^p},\mathrm{dvol})$ as in \cite[Section 2.3.1]{Min}, and a weighted graph associated to this covering. The $L^p$ Poincar\'{e} inequalities for subset of annuli then implies that the good covering satisfies {\em continuous $L^p$ Sobolev inequalities of order $\infty$}, in the sense of \cite[Definition 1.3]{Min}. In fact, an $L^p$ Sobolev inequalities of order $\infty$ is just another terminology for an $L^p$ Poincar\'{e} inequality. The proof of \cite[Theorem 2.23]{Min} shows that the weighted graph satisfies an {\em isoperimetric inequality}. According to \cite[Theorem 1.8]{Min}, the continuous Sobolev inequality for the covering, together with the isoperimetric inequality for the weighted graph, imply the global $L^p$ Hardy inequality \eqref{eq:Hardy}.

\end{proof}
\begin{question}
{\em 
Let $M$ be a complete Riemannian manifold satisfying \eqref{eq:DV}, \eqref{eq:UE}, \eqref{eq:RDV} for an exponent $\nu>1$, and \eqref{eq:PEp} for some $1\leq p<\nu$; does the $L^p$ Hardy inequality \eqref{eq:Hardy} hold for $1\le p<\nu$ ? In other words, can the assumption (RCE) be replaced by \eqref{eq:UE} in the statement of Theorem \ref{thm:hardy}? For $p=2$, it is proved in \cite[Theorem 1.2]{L} that, under \eqref{eq:DV}, \eqref{eq:RDV} for some $\nu>2$ and \eqref{eq:UE}, an $L^2$ Hardy inequality holds, however the proof does not extend easily to the case $p\neq 2$ unless one knows {\em a priori} that \eqref{eq:RRp} holds (which of course we do not want to assume in the present paper).
}
\end{question}

\section{Proof of the $L^p$ reverse inequality} \label{sec:proofmainalt}
To begin with, let us recall that, under the assumptions of Theorem \ref{thm:main}, there exists a covering of $M$ by admissible balls, as well as an associated partition of unity. The following statement can be found in \cite[Section 2.1]{DR}: 
%
\begin{Pro} \label{prop:goodcovering}
There exists a covering $(B_\alpha)_{\alpha\in \N}$ of $M$ by balls and an associated smooth partition of unity $(\chi_\alpha)_{\alpha\in \N}$ such that:
\begin{enumerate}
\item for every $\alpha\in \N$, the ball $B_\alpha$ is admissible,

\item the covering is {\em locally finite}: there exists $N\in\N$ such that for every $\alpha\in\N$,

$$\mathrm{Card}\{\beta\in\N\,;\,B_\alpha\cap B_\beta\neq \emptyset\}\leq N,$$
\item for every $R>0$, the set 

$$\{\alpha\in\N\,;\,B_\alpha\cap B(o,R)\neq \emptyset\}$$
is finite,
\item for all $\alpha\in \N$, $0\leq \chi_\alpha\leq 1$ and $\chi_\alpha$ has support in $B_\alpha$. Moreover, there exists a constant $C>0$ such that, for every $\alpha\in\N$, $||\nabla \chi_\alpha||_\infty\leq \frac{C}{r_\alpha}$, where $r_{\alpha}$ is the radius of $B_{\alpha}$,
\item for all $\alpha\ne 0$, 
\begin{equation} \label{eq:remote}
2^{-10}r(x_\alpha)\le r_\alpha\le 2^{-9}r(x_\alpha).
\end{equation}
\end{enumerate}

\end{Pro}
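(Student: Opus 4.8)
The plan is to follow \cite[Section 2.1]{DR}, building the covering from a maximal net adapted to the slowly varying scale $\rho(x)\simeq 2^{-10}r(x)$; only \eqref{eq:DV} and the soft facts that geodesic balls of $M$ have finite volume and that $r=d(\cdot,o)$ is $1$-Lipschitz enter the argument, while \eqref{eq:UE}, \eqref{eq:RDV} and (RCE) play no role. First I would fix $r_0=2^{k_0+1}$ with $k_0$ large, set $B_0:=B(o,r_0)$ (anchored, hence admissible), and, for each integer $k\ge k_0$, choose in the dyadic shell $A_k:=\{x\in M\,;\,2^k\le r(x)<2^{k+1}\}$ a maximal $2^{k-11}$-separated subset $\{x_{k,j}\}_j$, setting $B_{k,j}:=B(x_{k,j},r_{k,j})$ with $r_{k,j}:=2^{-10}r(x_{k,j})$. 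Since $r_{k,j}\le r(x_{k,j})/2$, each $B_{k,j}$ is remote and trivially satisfies \eqref{eq:remote}; and, by maximality, every point of $A_k$ lies at distance at most $2^{k-11}$ (which is $\le\tfrac12 r_{k,j}$) from some center, so the half-balls $\tfrac12 B_{k,j}$ cover $A_k$, and hence cover $\{r(x)\ge r_0/2\}$. Re-indexing $\{B_0\}\cup\{B_{k,j}\}_{k,j}$ as $(B_\alpha)_{\alpha\in\N}$ with $\alpha=0$ for $B_0$, items (1) and (5) hold; since moreover $\tfrac12 B_0=B(o,r_0/2)$, the family $(\tfrac12 B_\alpha)_\alpha$ — a fortiori $(B_\alpha)_\alpha$ — covers $M$.

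Next I would isolate the two quantitative facts on which everything rests, both with absolute implied constants. The first is that \emph{the radii of two overlapping balls are comparable}: if $B_\alpha\cap B_\beta\ne\emptyset$, then $d(x_\alpha,x_\beta)\le r_\alpha+r_\beta$, so since $r$ is $1$-Lipschitz and $r_\gamma\simeq 2^{-10}r(x_\gamma)$ for every $\gamma\ge 1$ — while $r(x_\alpha)\simeq r_0$ whenever $B_\alpha$ meets $B_0$, because then $r_0/2\le r(x_\alpha)\le 2r_0$ — one obtains $r_\alpha\simeq r_\beta$. The second is \emph{bounded overlap}: if $B_\beta\cap B_\alpha\ne\emptyset$, then by the first fact $x_\beta$ belongs to a fixed dilate of $B_\alpha$, while the centers $x_\beta$ (which come from a bounded number of consecutive shells) are pairwise at distance $\gtrsim r_\alpha$; hence \eqref{eq:DV} bounds the number of such $\beta$ by a constant $N$ independent of $\alpha$, which is (2). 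Item (3) then follows: $B_\alpha\cap B(o,R)\ne\emptyset$ forces $r(x_\alpha)\lesssim R$, so only finitely many shells $A_k$ are involved, and each contains finitely many net points because it has finite volume and the net is positively separated.

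For the partition of unity I would fix $\eta\in C^\infty([0,\infty);[0,1])$ with $\eta\equiv 1$ on $[0,\tfrac12]$ and $\eta\equiv 0$ on $[1,\infty)$, put $\psi_\alpha(x):=\eta\big(d(x,x_\alpha)/r_\alpha\big)$ for $\alpha\ge 1$, and $\psi_0(x):=\eta_0(r(x))$ for a smooth cutoff $\eta_0$ equal to $1$ on $(-\infty,\tfrac34 r_0]$, supported in $(-\infty,r_0)$, with $\abs{\eta_0'}\le C/r_0$. Each $\psi_\alpha$ then satisfies $0\le\psi_\alpha\le 1$, $\supp\psi_\alpha\subset B_\alpha$, $\psi_\alpha\equiv 1$ on $\tfrac12 B_\alpha$, and $\abs{\nabla\psi_\alpha}\le C/r_\alpha$ (genuine smoothness is recovered by mollifying at a scale $\ll r_\alpha$, or by replacing $d(\cdot,x_\alpha)$ and $r$ by comparable smooth functions with comparable gradients). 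Since the half-balls cover $M$ and at most $N$ of the $\psi_\beta$ are nonzero at any point, $\Psi:=\sum_\beta\psi_\beta$ is a locally finite sum with $1\le\Psi\le N$, so $\chi_\alpha:=\psi_\alpha/\Psi$ is a partition of unity with $0\le\chi_\alpha\le 1$ and $\supp\chi_\alpha\subset B_\alpha$; moreover, on $\supp\chi_\alpha$, $\abs{\nabla\chi_\alpha}\le \abs{\nabla\psi_\alpha}+\abs{\nabla\Psi}$, and each of the at most $N$ terms of $\nabla\Psi$ is some $\nabla\psi_\beta$ with $B_\beta\cap B_\alpha\ne\emptyset$, hence of size $O(1/r_\beta)=O(1/r_\alpha)$ by comparability of radii, which gives (4).

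The step I expect to be the crux is precisely the gradient estimate (4), and within it the comparability of radii of overlapping balls: this is what forces the choice $r_\alpha\simeq 2^{-10}r(x_\alpha)$ — so that the $1$-Lipschitz function $r$ varies by only a small relative amount across any ball of the covering — and, near the anchored ball, the lower bound $r(x_\alpha)\ge r_0/2$ built into the construction. Bounded overlap is the one place where \eqref{eq:DV} is genuinely needed; granted these two facts, the net construction and the choice of the subordinate bump functions are entirely routine.
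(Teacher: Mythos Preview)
Your proposal is correct and follows precisely the approach the paper invokes: the paper does not give its own proof but simply refers to \cite[Section 2.1]{DR}, and you have reproduced that construction (maximal net in dyadic shells with scale $\rho(x)\simeq 2^{-10}r(x)$, comparability of radii of overlapping balls, bounded overlap via \eqref{eq:DV}, and normalization of bump functions). The only point worth tightening is the pairwise separation claim across different shells in your bounded-overlap argument: centers from distinct shells $A_k$ and $A_{k'}$ need not be $\gtrsim r_\alpha$ apart, but your parenthetical ``bounded number of consecutive shells'' already carries the correct argument (count per shell, then sum), so this is cosmetic rather than a gap.
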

One can assume that $B_0=B(o,r_0)$, and up to enlarging the value of $r_0$ and discarding a finite number of balls intersecting $B_0$, one can also assume that each of the remaining balls $B$ of the covering is such that $14B$ is remote. In the sequel, we thus assume that the balls have been relabeled in such a way that $B_0=B(o,r_0)$ and for $\alpha\neq 0$, $14B_\alpha$ is remote. 

\medskip

Let us mention that point (5) of Proposition \ref{prop:goodcovering} will play an important role in the last part of the proof of Theorem \ref{thm:main} in which the Hardy inequality will be utilized. See Lemma \ref{lem:baHardy}. The assumptions of Theorem \ref{thm:main} imply that, for all $\alpha\in \N$, all balls inside $14B_\alpha$ support the $L^q$-Poincar\'e inequality; in particular, if $\tilde{B}\subset 2B_\alpha$, then $7\tilde{B}$ supports the $L^q$-Poincar\'e inequality. 
%

The idea for the proof of Theorem \ref{thm:main} is as follows: first, decompose $f$ into 

$$f=\sum_{\alpha\in \N} \chi_\alpha f=:\sum_{\alpha\in \N} f_\alpha.$$
We are going to estimate separately the ``diagonal terms'' $||\Delta^{1/2} f_\alpha||_{L^p(4B_\alpha)}$ and the ``off-diagonal'' terms $||\Delta^{1/2} f_\alpha||_{L^p(M\setminus 4B_\alpha)}$ for all $\alpha\in \N$.
\subsection{Estimates of the diagonal terms}
We first explain how to deal with the ``diagonal'' term $|| \Delta^{1/2} f_\alpha||_{L^p(4B_\alpha)}$, using ideas from \cite{AC}. The main tool is a precise localized Calder\'on-Zygmund decomposition for gradients of functions, which is a variation on \cite[Prop. 1.1]{AC}. Define the (uncentered) maximal function $\mathscr{M}$ by

$$\mathscr{M}u(x)=\sup_{x\ni B}\frac{1}{V(B)}\int_B \left\vert u\right\vert \,d\mu,$$
for all functions $u\in L^1_{loc}(M)$ and all $x\in M$. The required Calder\'on-Zygmund decomposition is as follows:

\begin{lemma}\label{lem:CZ}
Let $B$ be a ball in $M$, and $u\in C_0^\infty(B)$. Let $1\leq q<\infty$, and assume that, for all balls $\widetilde{B}\subset 2B$, the Poincar\'e inequality with exponent $q$ holds in $7\tilde{B}$. Then, there exists a constant $C>0$ depending only on the doubling constant, with the following property: for all $\lambda>\displaystyle\left(\frac{C\left\Vert \nabla u\right\Vert_q^q}{V(B)}\right)^{\frac 1q}$, let

$$\Omega:=\{x\in M\,;\,\mathscr{M}(|\nabla u^q)(x)>\lambda^q\}.$$
Then, $\Omega\subset 2B$, and there exists a denumerable collection of balls $(B_i)_{i\ge 1}\subset \Omega\subset 2B$ covering $\Omega$, a denumerable collection of $C^1$ functions $(b_i)_{i\ge 1}$ and a Lipschitz function $g$ such that:
\begin{enumerate}
\item $\displaystyle  u=g+\sum_{i\ge 1}b_i$,
\item the support of $g$ is included in $2B$, and $|\nabla g(x)|\lesssim \lambda$, for a.e. $x\in M$. Moreover, there exists a bounded vector field $H\in L^\infty(TM)$ vanishing outside $\Omega$, such that

\begin{equation}\label{eq:nabla_g}
\nabla g =\nabla u\cdot \mathbf{1}_{M\setminus \Omega}+H\,\,\mbox{a.e.},\quad ||H||_\infty\lesssim \lambda,
\end{equation}
\item the support of $b_i$ is included in $B_i$,

$$\displaystyle \int_{B_i} |b_i|^qd\mu\lesssim r_i^q\int_{B_i}|\nabla u|^q\,d\mu,$$
and

$$\displaystyle \int_{B_i} |\nabla b_i|^qd\mu\lesssim \lambda^q V(B_i).$$
\item $\displaystyle \sum_{i\ge 1} V(B_i)\lesssim \lambda^{-q}\int |\nabla u|^qd\mu$,
\item there is a finite upper bound $N$ for the number of balls $B_i$ that have a non-empty intersection,
\item if $B_i\cap B_j\neq \emptyset$ and we denote by $r_i$ (reps. $r_j$) their radius, then 

$$\frac{1}{3}r_j\leq r_i\leq 3r_j,$$

\item for every $i\in\N$, 

$$3B_i\cap (M\setminus \Omega)\neq \emptyset.$$

\end{enumerate}
\end{lemma}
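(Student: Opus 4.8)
The plan is to carry out the Calder\'on--Zygmund decomposition for gradients in the style of \cite[Prop.~1.1]{AC} and \cite[Section~2.1]{DR}, adapted to the present purely local hypothesis; the substance is classical, and the real work is the bookkeeping of dilations, so that every Poincar\'e inequality invoked is of the admissible form (Poincar\'e on $7\widetilde B$ with $\widetilde B\subset 2B$). First, the localization of $\Omega$. Since $u\in C_0^\infty(B)$, the function $|\nabla u|^q$ is supported in $B$; hence if $x\notin 2B$ and a ball $B'\ni x$ meets $B$, then $\mathrm{diam}(B')>r_B$, whence $r_{B'}>r_B/2$, $B\subset 5B'$, and $V(B')\gtrsim V(B)$ by \eqref{eq:DV}, so that $V(B')^{-1}\int_{B'}|\nabla u|^q\le C_0 V(B)^{-1}\|\nabla u\|_q^q$; for $B'\ni x$ not meeting $B$ the average is $0$. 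Thus $\mathscr{M}(|\nabla u|^q)(x)\le C_0 V(B)^{-1}\|\nabla u\|_q^q$ off $2B$, and taking $C\ge C_0$ forces $\Omega\subset 2B$. Then $\Omega$ is open (lower semicontinuity of $\mathscr{M}$) and a proper subset of $M$ ($M$ being noncompact), and the maximal theorem on the doubling space $(M,d,\mu)$ gives $\mu(\Omega)\lesssim\lambda^{-q}\|\nabla u\|_q^q$.

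\emph{Whitney covering and splitting.} Apply the Whitney covering lemma for proper open subsets of a doubling metric measure space to $\Omega$: it produces countably many balls $B_i=B(x_i,r_i)$ covering $\Omega$, with $r_i\simeq d(x_i,M\setminus\Omega)$, with bounded overlap — item (5) — and comparable radii for intersecting balls — item (6) — the dilation constant being fixed so that moreover $3B_i\cap(M\setminus\Omega)\ne\emptyset$ — item (7) — while $\{7B_i\}$ still has bounded overlap. Choose a smooth partition of unity $(\varphi_i)$ subordinate to $(B_i)$ with $\sum_i\varphi_i=\mathbf{1}_\Omega$, $0\le\varphi_i\le 1$, $\mathrm{supp}\,\varphi_i\subset B_i$, $\|\nabla\varphi_i\|_\infty\lesssim r_i^{-1}$, and set
\[
b_i:=(u-u_{B_i})\varphi_i,\qquad g:=u-\sum_i b_i,
\]
with $u_{B_i}$ the $\mu$-average of $u$ over $B_i$. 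The sum is locally finite, so $g$ is well defined and item (1) holds; moreover $\mathrm{supp}\,b_i\subset B_i\subset 2B$ and $\mathrm{supp}\,u\subset B$ give $\mathrm{supp}\,g\subset 2B$, while $g=u$ on $M\setminus\Omega$ and $g=\sum_i u_{B_i}\varphi_i$ on $\Omega$ (using $\sum_i\varphi_i=1$ there).

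\emph{Estimates.} For each $i$, $\tfrac17B_i\subset B_i\subset 2B$, so the hypothesis yields the $L^q$ Poincar\'e inequality on $B_i=7(\tfrac17B_i)$, whence $\int_{B_i}|u-u_{B_i}|^q\lesssim r_i^q\int_{B_i}|\nabla u|^q$ — the first bound in (3). Picking $y_i\in 3B_i\setminus\Omega$ (possible by (7)) and using $3B_i\ni y_i$ gives $V(3B_i)^{-1}\int_{3B_i}|\nabla u|^q\le\mathscr{M}(|\nabla u|^q)(y_i)\le\lambda^q$, so $\int_{B_i}|\nabla u|^q\lesssim\lambda^q V(B_i)$ by \eqref{eq:DV}; combining this with $\nabla b_i=\varphi_i\nabla u+(u-u_{B_i})\nabla\varphi_i$ and the previous Poincar\'e bound gives the second bound in (3). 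Item (4) follows from bounded overlap and $\mu(\Omega)\lesssim\lambda^{-q}\|\nabla u\|_q^q$. For item (2): on $M\setminus\Omega$, $\nabla g=\nabla u$ with $|\nabla u|^q\le\mathscr{M}(|\nabla u|^q)\le\lambda^q$ a.e.\ (Lebesgue differentiation); on $\Omega$, $\nabla g=\sum_i u_{B_i}\nabla\varphi_i=:H$, supported in $\Omega$, and on a ball $B_j$ one writes $\nabla g=\sum_{i:\,B_i\cap B_j\ne\emptyset}(u_{B_i}-u_{B_j})\nabla\varphi_i$ (since $\sum_i\nabla\varphi_i=0$ on $\Omega$). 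As $B_i,B_j\subset 7B_i$ by (6), the Poincar\'e inequality holds on $7B_i$ and $7B_i\ni y_i\notin\Omega$, so $|u_{B_i}-u_{B_j}|\lesssim r_i\bigl(V(7B_i)^{-1}\int_{7B_i}|\nabla u|^q\bigr)^{1/q}\lesssim r_i\lambda\lesssim r_j\lambda$; with $|\nabla\varphi_i|\lesssim r_i^{-1}$ and at most $N$ nonzero terms, $|\nabla g|\lesssim\lambda$ on $\Omega$ as well. Hence $\nabla g=\nabla u\cdot\mathbf{1}_{M\setminus\Omega}+H$ with $\|H\|_\infty\lesssim\lambda$ and $|\nabla g|\lesssim\lambda$ a.e., and the matching of the two formulas for $g$ across $\partial\Omega$ (where the relevant $B_i$ are small and $u_{B_i}$ is close to $u$) shows $g$ is Lipschitz. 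This gives the lemma.

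\emph{Main obstacle.} There is no deep point; the estimates are the standard ones. What needs care is the localization: each Poincar\'e inequality used must be supported on a ball of the exact form granted by the hypothesis, so the Whitney dilation constant has to be chosen so that items (5)--(7) and the bounded overlap of $\{7B_i\}$ all hold simultaneously, and one must verify $\tfrac17B_i\subset 2B$ and $B_i,B_j\subset 7B_i$ so that the dilates actually sit over $2B$. Once these choices are pinned down, the remaining steps are routine.
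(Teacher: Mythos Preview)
Your approach is essentially that of the paper (and of \cite{AC}, \cite{DR}): localize $\Omega\subset 2B$, take a Whitney cover, set $b_i=(u-u_{B_i})\varphi_i$, and estimate. The construction and the proofs of items (3)--(7) are fine.

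The gap is in item (2), precisely at the point the paper singles out as subtle. You compute $\nabla g$ pointwise on $M\setminus\Omega$ (where $g=u$) and pointwise on $\Omega$ (where $g=\sum_i u_{B_i}\varphi_i$), and then dismiss the transition with ``the matching of the two formulas for $g$ across $\partial\Omega$ \dots\ shows $g$ is Lipschitz''. This is exactly the step that fails without further argument: the sum $\sum_i b_i$ is only locally finite in $\Omega$, the balls $B_i$ accumulate at $\partial\Omega$, and a priori the distributional gradient of $b=\sum_i b_i$ could carry a singular part on $\partial\Omega$ (think of the caricature $b=\mathbf 1_\Omega$). Showing $|\nabla g|\lesssim\lambda$ separately on the two open pieces does not rule this out, and your formula $H=\sum_i u_{B_i}\nabla\varphi_i$ is not even absolutely convergent (on $B_j$ it is only controlled by $\|u\|_\infty/r_j$, which blows up near $\partial\Omega$); the cancellation $\sum_i\nabla\varphi_i=0$ that you invoke is a pointwise identity in $\Omega$, not one that automatically survives the limit across $\partial\Omega$.

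The paper closes this gap differently: it first shows $b\in L^1_{\mathrm{loc}}$ and that $\nabla b=\sum_i\nabla b_i$ holds in $L^q$ (hence distributionally), so that $\nabla g=(\nabla u)\mathbf 1_{M\setminus\Omega}+H$ with $H:=-\sum_i(u-u_{B_i})\nabla\chi_i\in L^q$, and then proves $\|H\|_\infty\lesssim\lambda$ by duality against $X\in L^1\cap L^{q'}(TM)$, inserting $\sum_m\chi_m=\mathbf 1_\Omega$ and using $\sum_{i\in I_m}\nabla\chi_i=0$ on $B_m$ to reduce to differences $u_{B_m}-u_{B_i}$, which are then bounded via Poincar\'e on $7B_m$ and $7B_m\cap(M\setminus\Omega)\ne\emptyset$. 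This is the content of the unpublished correction \cite{A} to \cite{AC}; you should either reproduce that duality argument or give a genuine proof that $g$ is globally Lipschitz, not just piecewise.
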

The construction of the covering and of the functions $(b_i)_{i\ge 1}$ has been explained in details in \cite[Appendix B]{DR}; property 3 is an easy application of the Poincar\'e inequality which holds for every ball $B_i$. It turns out that property (2) is subtle, in fact the proof of Proposition 1.1 in \cite{AC} has a gap, which has subsequently been addressed in the unpublished note \cite{A}. For the sake of clarification of this point, we provide a proof of points 2-4 from Lemma \ref{lem:CZ} in the Appendix.\par

\bigskip

\noindent Let us now turn to the estimate of $\left\Vert \Delta^{1/2}f_{\alpha}\right\Vert_{L^p(4B_\alpha)}$. Following \cite{AC}, we first prove:
\begin{lemma} \label{lem:muh}
For all $\alpha\in \N$, all $\varphi \in C^{\infty}_c(B_\alpha)$ and all $\lambda>\left(\displaystyle\frac{C\left\Vert \nabla h\right\Vert_q^q}{V(B_\alpha)}\right)^{\frac 1q}$,
\begin{equation} \label{eq:muh} \mu(\{x\in 4B_\alpha;\ |\Delta^{1/2}\varphi(x)|>\lambda\})\lesssim \frac{1}{\lambda^q} \int_{B_\alpha} |\nabla \varphi|^q\,d\mu.
\end{equation}
\end{lemma}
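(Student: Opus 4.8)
\textbf{Proof plan for Lemma \ref{lem:muh}.} The plan is to prove the weak-type $(q,q)$ estimate \eqref{eq:muh} for $\Delta^{1/2}$ localized to $4B_\alpha$ by combining the Calder\'on--Zygmund decomposition of Lemma \ref{lem:CZ} applied to $\varphi$ at level $\lambda$ with the Gaussian heat kernel bounds \eqref{eq:UE}, \eqref{eq:dUE} and \eqref{eq:dUE2}, following the scheme of \cite[Section 1]{AC}. Write $\Omega=\{x:\mathscr M(|\nabla\varphi|^q)>\lambda^q\}$, which is contained in $2B_\alpha$, and decompose $\varphi=g+b$ with $b=\sum_i b_i$ as in Lemma \ref{lem:CZ}. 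Since $\Delta^{1/2}$ is bounded on $L^2$ and $|\nabla g|\lesssim\lambda$ a.e.\ while $g$ is supported in $2B_\alpha$, Chebyshev gives
\[
\mu(\{x\in 4B_\alpha:|\Delta^{1/2}g(x)|>\lambda\})\lesssim\frac{1}{\lambda^2}\|\Delta^{1/2}g\|_2^2=\frac{1}{\lambda^2}\|\nabla g\|_2^2\lesssim\frac{1}{\lambda^2}\Big(\lambda^2\mu(\Omega)+\int_{M\setminus\Omega}|\nabla\varphi|^2\Big),
\]
and the last term is controlled by $\lambda^{2-q}\int|\nabla\varphi|^q$ on $M\setminus\Omega$ plus $\mu(\Omega)\lesssim\lambda^{-q}\int|\nabla\varphi|^q$ from point (4) of Lemma \ref{lem:CZ}; more precisely one splits $\nabla g=\nabla\varphi\cdot\mathbf 1_{M\setminus\Omega}+H$ with $\|H\|_\infty\lesssim\lambda$ and $H$ supported in $\Omega$, which makes this bookkeeping routine.

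For the bad part, the standard device is to write, for each $i$, $b_i=\Delta^{1/2}(\Delta^{-1/2}b_i)$ and use the subordination formula $\Delta^{-1/2}=c\int_0^\infty e^{-t\Delta}\,\frac{dt}{\sqrt t}$, so that $\Delta^{1/2}b_i=c\int_0^\infty \Delta e^{-t\Delta}b_i\,\frac{dt}{\sqrt t}$. Split the integral at $t=r_i^2$ (the scale of $B_i$). For $t\le r_i^2$ one uses the off-diagonal Gaussian bounds on $\partial_t p_t$ from \eqref{eq:dUE}--\eqref{eq:dUE2} together with the $L^q$ bound $\int_{B_i}|\nabla b_i|^q\lesssim\lambda^q V(B_i)$ — writing $\Delta e^{-t\Delta}b_i=-\partial_t e^{-t\Delta}b_i$ and integrating against $b_i$, or alternatively using $\Delta e^{-t\Delta}=(\sqrt t\,\nabla)^* \cdot(\text{bounded operator})$ and the gradient estimate on $b_i$ — to show that the contribution of $\bigcup_i 4B_i$ to the level set is $\lesssim\lambda^{-q}\sum_i\lambda^q V(B_i)\lesssim\lambda^{-q}\int|\nabla\varphi|^q$, using point (4) and the bounded overlap (point (5)) of the $B_i$. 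For $t\ge r_i^2$ and $x$ outside $4B_i$, one uses the cancellation $\int b_i\,d\mu$ is small — here one does \emph{not} have $\int b_i=0$, but the size estimate $\int_{B_i}|b_i|^q\lesssim r_i^q\int_{B_i}|\nabla\varphi|^q$ combined with the decay in $t$ of $t^{-1/2}\|\Delta e^{-t\Delta}\|$ and the Gaussian tails in $d(x,B_i)$ give, after summing over $i$ and integrating over $x\in 4B_\alpha\setminus\bigcup 4B_i$, a bound $\lesssim\lambda^{-q}\int|\nabla\varphi|^q$; the geometric series in the annuli $2^{k}B_i$ converges thanks to \eqref{eq:VD} and the exponential factor. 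Summing the three contributions (good part, bad part near the $B_i$, bad part far from the $B_i$) yields \eqref{eq:muh}.

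The main obstacle I expect is the bad part at \emph{large} times, where the absence of a mean-zero condition on $b_i$ forces one to be careful: one must exploit the restriction of the level set to $4B_\alpha$ and the decay of $\|\sqrt t\,\Delta e^{-t\Delta}\|_{L^q\to L^q}$-type quantities (or rather the pointwise kernel decay $|\partial_t p_t(x,y)|\lesssim t^{-1}V(y,\sqrt t)^{-1}e^{-d^2/ct}$) to make the $t$-integral over $[r_i^2,\infty)$ converge, and then sum over $i$ using finite overlap. A secondary subtlety, already flagged in the excerpt, is that property (2) of Lemma \ref{lem:CZ} — the a.e.\ identity $\nabla g=\nabla\varphi\,\mathbf 1_{M\setminus\Omega}+H$ with $\|H\|_\infty\lesssim\lambda$ — must be invoked exactly as stated (and is justified in the Appendix), rather than the naive $|\nabla g|\lesssim\lambda$, in order to get the clean $\lambda^{-q}\int|\nabla\varphi|^q$ bound for the good part; the rest is the by-now classical Calder\'on--Zygmund bookkeeping under \eqref{eq:DV} and \eqref{eq:UE}.
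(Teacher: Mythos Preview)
Your overall architecture matches the paper's: Calder\'on--Zygmund at level $\lambda$, treat $g$ by $L^2$ and Chebyshev, split each $\Delta^{1/2}b_i$ at $t=r_i^2$, and remove $\bigcup_i 2B_i$ by volume. Your remarks on the good part and on why \eqref{eq:nabla_g} is needed are exactly right. However, your description of the bad part conflates the spatial splitting (near/far from $\bigcup_i B_i$) with the time splitting, and more importantly it omits the two specific devices the paper uses to make the \emph{sum over $i$} converge.

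For the short-time piece $T_\alpha^i b_i=\int_0^{r_i^2}\Delta e^{-t\Delta}b_i\,\frac{dt}{\sqrt t}$ restricted to $4B_\alpha\setminus\bigcup_i 2B_i$, the pointwise Gaussian annulus estimate you sketch yields $\|T_\alpha^i b_i\|_{L^2(C_j(B_i))}\lesssim V(2^jB_i)^{1/2}e^{-c4^j}\lambda$ for each fixed $i$; the paper then controls $\|\sum_i \mathbf 1_{M\setminus 2B_i}|T_\alpha^i b_i|\|_{L^2}$ by \emph{duality}: pair with $v\in L^2$, bound $\int_{C_j(B_i)}|v|^2\lesssim V(2^{j+1}B_i)\,\mathscr M(|v|^2)(y)$ for $y\in B_i$, sum in $i$ using bounded overlap, and finish with the Kolmogorov inequality and the weak $(1,1)$ bound for $\mathscr M$. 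Your proposal gestures at the annulus series but does not supply this duality/maximal-function step, without which the $\ell^1$-sum $\sum_i\|T_\alpha^i b_i\|_{L^2(\cdot)}$ is not controlled.

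For the long-time piece $U_\alpha^i b_i=\int_{r_i^2}^\infty\Delta e^{-t\Delta}b_i\,\frac{dt}{\sqrt t}$, the Gaussian tail is useless once $t\gg r_i^2$, and the bound $\|\Delta e^{-t\Delta}\|_{q\to q}\lesssim t^{-1}$ you invoke gives only $\|U_\alpha^i b_i\|_q\lesssim r_i^{-1}\|b_i\|_q\lesssim\|\nabla\varphi\|_{L^q(B_i)}$, which again does not sum in $i$. The paper instead uses the vertical Littlewood--Paley--Stein square function: writing $\sum_i U_\alpha^i b_i=\int_0^\infty t\Delta e^{-t\Delta}F_t\,\frac{dt}{t}$ with $F_t=\sum_i b_i t^{-1/2}\mathbf 1_{t>r_i^2}$, duality against $g\in L^{q'}$ and the $L^{q'}$ square-function bound $\|(\int_0^\infty|t\Delta e^{-t\Delta}g|^2\frac{dt}{t})^{1/2}\|_{q'}\lesssim\|g\|_{q'}$ reduce matters to $\|(\int_0^\infty|F_t|^2\frac{dt}{t})^{1/2}\|_q$, which equals $\|(\sum_i |b_i|^2/r_i^2)^{1/2}\|_q$ and is then controlled by $\|\nabla\varphi\|_q$ via bounded overlap and point (3) of Lemma~\ref{lem:CZ}. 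You correctly flag the large-$t$ regime as the crux, but the ingredient that resolves it is this square-function estimate, not bare kernel decay.
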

\begin{proof}
For every $\lambda> \left(\displaystyle\frac{C\left\Vert \nabla \varphi\right\Vert_q^q}{V(B_\alpha)}\right)^{\frac 1q}$, Lemma \ref{lem:CZ} provides a collection of balls $(B_\alpha^i)_{i\geq 1}$ included in $2B_\alpha$, a Lipschitz function $g_\alpha$ and a collection of $C^1$ functions $(b_\alpha^i)_{i\geq 1}$ sharing the properties listed in Lemma \ref{lem:CZ}. In particular,
$$
\varphi=g_\alpha+\sum_i b^{i}_\alpha.
$$
Note that, for all $\alpha\in \N$ and all $i\ge 1$, $B^{i}_\alpha\subset 2B_\alpha$ and the balls $B_\alpha^{i}$ then satisfy the $L^q$ Poincar\'e inequality. \par
\noindent In the sequel of the argument, we use the following integral representation of $\Delta^{1/2}$:
$$
\Delta^{1/2}=c\int_0^{+\infty} \Delta e^{-t\Delta}\frac{dt}{\sqrt{t}},
$$
where $c>0$ is an unimportant constant. As in \cite[Section 1.2]{AC}, it is enough to prove the required estimates for $\int_{\varepsilon}^R \Delta e^{-t\Delta}\frac{dt}{\sqrt{t}}$ for $0<\varepsilon<R<\infty$, with constants independent of $\varepsilon,R$. In what follows, we ignore this issue and write directly $\int_0^{+\infty}$. The meaning of $\Delta^{1/2}g_\alpha$ and $\Delta^{1/2}b^{i}_\alpha$ is analogous to the one given in \cite[Section 1.2]{AC} and relies on the pointwise Gaussian upper bounds \eqref{eq:UE} and \eqref{eq:dUE} for $p_t(x,y)$ and $\left\vert \frac{\partial p_t}{\partial_t}(x,y)\right\vert$ respectively.\par
\noindent We first claim that
\begin{equation} \label{eq:mug}
\mu\left(\left\{x\in 4B_\alpha;\ \left\vert \Delta^{1/2}g_\alpha(x)\right\vert>\frac{\lambda}3\right\}\right)\le \frac{C}{\lambda^q}\int_{2B_\alpha} \left\vert \nabla g_\alpha(x)\right\vert^qd\mu(x).
\end{equation}
Indeed,
\begin{eqnarray*}
\mu\left(\left\{x\in 4B_\alpha;\ \left\vert \Delta^{1/2}g_\alpha(x)\right\vert>\frac{\lambda}3\right\}\right) & \le & \frac{9}{\lambda^2}\int_{4B_\alpha} \left\vert \Delta^{1/2}g_\alpha(x)\right\vert^2d\mu(x)\\
& \le & \frac{9}{\lambda^2}\int_{M} \left\vert \nabla g_\alpha(x)\right\vert^2d\mu(x)\\
& \lesssim & \frac{1}{\lambda^2} \lambda^{2-q} \int_{M} \left\vert \nabla g_\alpha(x)\right\vert^qd\mu(x)\\
& \lesssim & \frac{1}{\lambda^q} \int_{M} \left\vert \nabla \varphi(x)\right\vert^qd\mu(x).
\end{eqnarray*}
The last line is due to the fact that
\begin{eqnarray*}
\int_{M} \left\vert \sum_{i} \nabla b^{i}_{\alpha}(x)\right\vert^qd\mu(x)
& \lesssim & \sum_{i} \int_M \left\vert\nabla b^{i}_{\alpha}(x)\right\vert^qd\mu(x)\\
& \lesssim & \lambda^q \sum_i V(B^{i}_\alpha)\\
& \lesssim & \int_M \left\vert \nabla \varphi(x)\right\vert^qd\mu(x),
\end{eqnarray*}
which implies in turn that
\begin{eqnarray*}
\left\Vert \nabla g_\alpha\right\Vert_q & \le & \left\Vert \nabla \varphi\right\Vert_q+\left\Vert \sum_i \nabla b^{i}_{\alpha}\right\Vert_q\\
& \lesssim & \left\Vert \nabla \varphi\right\Vert_q.
\end{eqnarray*}

To cope with the terms involving $\Delta^{1/2}b^{i}_\alpha$, decompose
\begin{eqnarray}
\Delta^{1/2}b^{i}_\alpha & = & c \int_0^{+\infty} \Delta e^{-t\Delta}b^{i}_\alpha\frac{dt}{\sqrt{t}} \nonumber\\
& = & c\int_0^{(r^{i}_\alpha)^2} \Delta e^{-t\Delta}b^{i}_\alpha\frac{dt}{\sqrt{t}}+c\int_{(r^{i}_\alpha)^2}^{+\infty} \Delta e^{-t\Delta}b^{i}_\alpha\frac{dt}{\sqrt{t}} \nonumber\\
& =: & T^{i}_\alpha b^{i}_\alpha+U^{i}_\alpha b^{i}_\alpha \label{eq:split}.
\end{eqnarray}
We therefore have to establish
\begin{equation} \label{eq:estimTi}
I:=\mu\left(\left\{x\in 4B_\alpha;\ \left\vert \sum_{i} T^{i}_\alpha b^{i}_\alpha(x)\right\vert>\frac{\lambda}3\right\}\right)\lesssim \frac 1{\lambda^q} \left\Vert \nabla \varphi\right\Vert_q^q
\end{equation}
and
\begin{equation} \label{eq:estimUi}
J:=\mu\left(\left\{x\in 4B_\alpha;\ \left\vert \sum_{i} U^{i}_\alpha b^{i}_\alpha(x)\right\vert>\frac{\lambda}3\right\}\right)\lesssim \frac 1{\lambda^q} \left\Vert \nabla \varphi\right\Vert_q^q.
\end{equation}
Let us first consider \eqref{eq:estimTi}. The quantity $I$ is easily estimated by
\begin{eqnarray*}
I & \le & \mu\left(\bigcup_i 2B^{i}_\alpha\right)+\mu\left(\left\{x\in 4B_\alpha\setminus \bigcup_i2B^{i}_\alpha;\ \left\vert \sum_{i} T^{i}_\alpha b^{i}_\alpha(x)\right\vert>\frac{\lambda}3\right\}\right)\\
& =: & I_\alpha+J_\alpha.
\end{eqnarray*}
First, \eqref{eq:DV} and the properties of the Calder\'on-Zygmund decomposition yield at once
\begin{eqnarray*}
I_\alpha & \le & \sum_i V(2B^{i}_\alpha)\\
& \lesssim & \sum_i V(B^{i}_\alpha)\\
& \lesssim & \lambda^{-q}\int_M \left\vert \nabla \varphi(x)\right\vert^qd\mu(x).
\end{eqnarray*}
As far as $J_\alpha$ is concerned, one has
\begin{eqnarray}
J_\alpha & \le &  \frac{9}{\lambda^2} \int_{4B_\alpha\setminus \bigcup_i2B^{i}_\alpha} \left\vert \sum_{i} T^{i}_\alpha b^{i}_\alpha(x)\right\vert^2d\mu(x) \nonumber\\
& \le &  \frac{9}{\lambda^2} \int_{4B_\alpha} \left\vert \sum_i u^i_\alpha(x)\right\vert^2d\mu(x) \label{eq:sumui},
\end{eqnarray}
where
$$
u^{i}_\alpha:={\bf 1}_{4B_\alpha\setminus 2B^{i}_\alpha} \left\vert T^{i}_\alpha b^{i}_\alpha\right\vert.
$$
To estimate the right-hand side in \eqref{eq:sumui}, we argue by duality. Pick up a fonction $v\in L^2(4B_\alpha)$ with $\left\Vert v\right\Vert_2=1$ and decompose
\begin{eqnarray}
\left\vert \int_{4B_\alpha} \sum_i u^i_\alpha(x) v(x)d\mu(x)\right\vert & = & \left\vert  \sum_i \sum_{j\ge 1} \int_{C_j(B^{i}_\alpha)} u^i_\alpha(x) v(x)d\mu(x) \right\vert \nonumber\\
& =: & \left\vert \sum_i \sum_{j\ge 1} A_{ij}^{\alpha}\right\vert \label{eq:aij},
\end{eqnarray}
where
$$
C_j(B):=2^{j+1}B\setminus 2^jB
$$
for all open balls $B\subset M$ and all $j\ge 1$. In order to estimate $A_{ij}^{\alpha}$, we need a pointwise upper bound for $\left\vert \frac{\partial}{\partial t}e^{-t\Delta}b^{i}_\alpha\right\vert$ in $C_j(B^{i}_\alpha)$, $j\geq 1$. So, let $j\ge 1$ and $x\in C_j(B^{i}_\alpha)$. Denote by $x_\alpha^i$ the center of $B_\alpha^i$, and notice that \eqref{eq:VD} and \eqref{eq:eqvol} imply, for all $z\in B^{i}_\alpha$ and all $t\in (0,(r^{i}_\alpha)^2)$,

\begin{eqnarray*}
\frac{V(x^{i}_\alpha,\sqrt{t})}{V(z,\sqrt{t})} &=& \frac{V(x^{i}_\alpha,\sqrt{t})}{V(x^{i}_\alpha,r^{i}_\alpha)}\cdot \frac{V(x^{i}_\alpha,r^{i}_\alpha)}{V(z,r^{i}_\alpha)}\cdot \frac{V(z,r^{i}_\alpha)}{V(z,\sqrt{t})}\\
&\lesssim & \left(\frac{r^{i}_\alpha}{\sqrt{t}}\right)^D.
\end{eqnarray*}
Bearing in mind that $b^{i}_\alpha$ has support in $B^{i}_\alpha$, that, for all $x\in C_j(B^{i}_\alpha)$ and all $z\in B^{i}_\alpha$, one has
$$
d(x,z)\ge d(x,x^{i}_\alpha)-d(z,x^{i}_\alpha)\ge (2^j-1)r^{i}_\alpha\ge \frac 12 2^jr^{i}_\alpha
$$
(recall that $j\ge 1$) and using \eqref{eq:dUE2}, one obtains, for all $x\in C_j(B^{i}_\alpha)$,

\begin{eqnarray*}
\left| \frac{\partial}{\partial t}e^{-t\Delta}b^{i}_\alpha(x)\right| &\lesssim &  \frac{1}{t} \left(\frac{r^{i}_\alpha}{\sqrt{t}}\right)^D  \frac{e^{-c\frac{4^j(r^{i}_\alpha)^2}{t}}}{V(x^{i}_\alpha,\sqrt{t})}\int_{B^{i}_\alpha}|b^{i}_\alpha(z)|\,d\mu(z)\\
& \lesssim & \frac{1}{t}\left(\frac{r^{i}_\alpha}{\sqrt{t}}\right)^D  e^{-c\frac{4^j(r^{i}_\alpha)^2}{t}} \frac{V(x^{i}_\alpha,r^{i}_\alpha)}{V(x^{i}_\alpha,\sqrt{t})}\fint_{B^{i}_\alpha}|b^{i}_\alpha(z)|\,d\mu(z)\\
&\lesssim & \frac{1}{t} \left(\frac{r^{i}_\alpha}{\sqrt{t}}\right)^{2D}  e^{-c\frac{4^j(r^{i}_\alpha)^2}{t}}  \left(\fint_{B^{i}_\alpha}|b^{i}_\alpha(z)|^q\,d\mu(z)\right)^{1/q}\\\
& \lesssim & \frac {r^{i}_{\alpha}}t \left(\frac{(r^{i}_\alpha)^2}{t}\right)^{D}  e^{-c\frac{4^j(r^{i}_\alpha)^2}{t}} \frac 1{\left(V(B^{i}_\alpha)\right)^{1/q}}\left\Vert \nabla \varphi\right\Vert_{L^q(B^{i}_\alpha)} \\
& \le & \frac {r^{i}_{\alpha}}t \left(\frac{(r^{i}_\alpha)^2}{t}\right)^{D}  e^{-c\frac{4^j(r^{i}_\alpha)^2}{t}} \lambda,
\end{eqnarray*}
where, in the third line, we have used \eqref{eq:VD} and H\"older's inequality, while the fourth one follows from point (3) in Lemma \ref{lem:CZ}. As a consequence,
using doubling again, one obtains
\begin{eqnarray*}
\left\Vert \Delta e^{-t\Delta}b^i_\alpha\right\Vert_{L^2(C_j(B^{i}_\alpha))} & \le & \mu(C_j(B^{i}_\alpha))^{1/2} \left\Vert \Delta e^{-t\Delta}b^{i}_\alpha\right\Vert_{L^{\infty}(C_j(B^{i}_\alpha))}\\
& \lesssim & V(2^jB^{i}_\alpha)^{1/2} \frac {r^{i}_{\alpha}}t \left(\frac{(r^{i}_\alpha)^2}{t}\right)^{D}  e^{-c\frac{4^j(r^{i}_\alpha)^2}{t}} \lambda.
\end{eqnarray*}
From thus, we infer that
\begin{eqnarray}
\left\Vert u^{i}_\alpha\right\Vert_{L^2(C_j(B^i_\alpha))} & = & \left\Vert T^{i}_\alpha b^{i}_\alpha\right\Vert_{L^2(C_j(B^i_\alpha))}\nonumber \\
& \le & \int_0^{(r^{i}_\alpha)^2} \left\Vert \Delta e^{-t\Delta}b^i_\alpha\right\Vert_{L^2(C_j(B^{i}_\alpha))} \frac{dt}{\sqrt{t}} \nonumber\\
& \lesssim & V(2^jB^{i}_\alpha)^{1/2} \lambda \int_0^{(r^{i}_\alpha)^2} \frac {r^{i}_{\alpha}}t \left(\frac{(r^{i}_\alpha)^2}{t}\right)^{D}  e^{-c\frac{4^j(r^{i}_\alpha)^2}{t}} \frac{dt}{\sqrt{t}} \nonumber\\
& \lesssim & V(2^jB^{i}_\alpha)^{1/2} \lambda \int_0^1 \frac 1{u} \left(\frac{1}{u}\right)^{D} e^{-c\frac{4^j}{u}}  \frac{du}{\sqrt{u}}\nonumber \\
& \lesssim & V(2^jB^{i}_\alpha)^{1/2} e^{-\frac c24^j}\lambda, \label{eq:estimtibi}.
\end{eqnarray}
where, in the fourth line, we made the change of.variables $t=(r^{i}_\alpha)^2u$. \par

\medskip
\noindent On the other hand, for all $y\in B^{i}_\alpha$, 
\begin{eqnarray}
\left(\int_{C_j(B^{i}_\alpha)} \left\vert v(z)\right\vert^2d\mu(z)\right)^{\frac 12} & \le & \left(\int_{2^{j+1}B^{i}_\alpha} \left\vert v(z)\right\vert^2d\mu(z)\right)^{\frac 12} \nonumber\\
& \lesssim & V^{1/2}(2^{j+1}B^{i}_\alpha) \left(\mathscr{M}(\left\vert v\right\vert^2)(y)\right)^{1/2}.\label{eq:estimvcj}
\end{eqnarray}
Gathering \eqref{eq:estimtibi} and \eqref{eq:estimvcj} and using Cauchy-Schwarz and \eqref{eq:DV}, one therefore obtains
\begin{eqnarray*}
A^{\alpha}_{ij} & \le.& V(2^jB^{i}_\alpha)^{1/2} e^{-\frac c24^j}\lambda\cdot  V^{1/2}(B^{i}_\alpha) \fint_{B^{i}_\alpha} \left(\mathscr{M}(\left\vert v\right\vert^2)(y)\right)^{1/2} d\mu(y)\\
& \lesssim & 2^{jD/2} e^{-\frac c24^j}\lambda\int_{B^{i}_\alpha} \left(\mathscr{M}(\left\vert v\right\vert^2)(y)\right)^{1/2} d\mu(y).
\end{eqnarray*}
Summing up over $i,j$ and recalling \eqref{eq:aij}, one deduces
\begin{eqnarray}
\left\vert \int_{4B_\alpha} \sum_i u^i_\alpha(x) v(x)d\mu(x)\right\vert & \lesssim & \lambda \int_{4B_\alpha} \sum_i {\bf 1}_{B^{i}_\alpha}(y) \left(\mathscr{M}(\left\vert v\right\vert^2)(y)\right)^{1/2} d\mu(y) \nonumber\\
& \lesssim & N\lambda \int_{\bigcup_i B^{i}_\alpha}  \left(\mathscr{M}(\left\vert v\right\vert^2)(y)\right)^{1/2} d\mu(y)\nonumber\\
& \lesssim & N\lambda \mu\left(\bigcup_i B^{i}_\alpha\right)^{1/2} \left\Vert  \left(\mathscr{M}(\left\vert v\right\vert^2)\right)\right\Vert_{1,\infty}^{1/2}\nonumber \\
& \lesssim & N\lambda \mu\left(\bigcup_i B^{i}_\alpha\right)^{1/2} \left\Vert  \left\vert v\right\vert^2\right\Vert_{1}\nonumber\\
& \lesssim & N\lambda \mu\left(\bigcup_i B^{i}_\alpha\right)^{1/2} \label{eq:uialphav},
\end{eqnarray}
where the second line follows from the finite overlap property for the balls $B^{i}_\alpha$ (recall that $N$ is given by Lemma \ref{lem:CZ}), the third one is due to the Kolmogorov inequality (\cite[Lemma 10, Section 7.7]{Meyer}) and the fourth one to the weak $(1,1)$ boundedness of $\mathscr{M}$. Finally, taking the supremum over all functions $v\in L^2(4B_\alpha)$ such that $\left\Vert v\right\Vert_{L^2(4B_\alpha)}=1$ and recalling \eqref{eq:sumui}, we conclude
\begin{eqnarray*}
J_\alpha & \le & \frac{9}{\lambda^2} \int_{4B_\alpha} \left\vert \sum_i u^i_\alpha(x)\right\vert^2d\mu(x)\\
& \lesssim  & \mu\left(\bigcup_i B^{i}_\alpha\right)\\
& \lesssim & \frac 1{\lambda^q} \left\Vert \nabla \varphi\right\Vert_q^q.
\end{eqnarray*}
Thus, \eqref{eq:estimTi} is proved.\par

\bigskip

\noindent Let us now turn to the proof of \eqref{eq:estimUi}.  We follow ideas in \cite[Section 1.2]{AC}, however we estimate the $L^q$ norm of $U^i_\alpha b^{i}_\alpha$ for each $\alpha$ separately, instead of considering the $L^q$-norm of $\sum_{\alpha} U^{i}_\alpha b^{i}_\alpha$ as in \cite{AC}. We write

$$U^{i}_\alpha b^{i}_\alpha=c\int_{(r^{i}_\alpha)^2}^\infty t\Delta e^{-t\Delta}\left(\frac{b^{i}_\alpha}{\sqrt{t}}\right)\,\frac{dt}{t}=c\int_0^\infty t\Delta e^{-t\Delta}b_t\,\frac{dt}{t},$$
with

$$b_t:=\frac{b^{i}_\alpha}{\sqrt{t}}\mathbf{1}_{[(r^{i}_\alpha)^2,+\infty[}(t).$$
Let $g\in L^{q^{\prime}}(4B_\alpha)$ with $\frac 1q+\frac 1{q^{\prime}}=1$ and $\left\Vert g\right\Vert_{L^{q^{\prime}}(4B_\alpha)}=1$. Since $q^{\prime}\in (1,+\infty)$, Littlewood-Paley-Stein estimates (\cite[Chapter 4, Theorem 10]{topics}) yield

\begin{eqnarray*}
\left\vert \int_{4B_\alpha} (U^i_\alpha b^{i}_\alpha )gd\mu\right\vert  & = & \left\vert  \int_0^\infty \langle t\Delta e^{-t\Delta} b_t,g\rangle\right\vert  \\
&= & \left\vert  \int_0^\infty \langle b_t,  t\Delta e^{-t\Delta} g\rangle\right\vert \\
&\leq & \left\Vert \left(\int_0^\infty |b_t|^2\frac{dt}{t}\right)^{1/2}\right \Vert_{q} \left\Vert \left(\int_0^\infty |t\Delta e^{-t\Delta}g|^2\frac{dt}{t}\right)^{1/2}\right \Vert_{q^{\prime}}\\
&\lesssim &\left \Vert \left(\int_0^\infty |b_t|^2\frac{dt}{t}\right)^{1/2}\right \Vert_{q} ||g||_{q^{\prime}}.
\end{eqnarray*}
It is easily seen that

$$\left \Vert \left(\int_0^\infty |b_t|^2\frac{dt}{t}\right)^{1/2}\right \Vert_{q}=\frac{1}{r^i_\alpha}||b^i_\alpha||_{q},$$
hence

$$\left\vert \int_{4B_\alpha} (U^i_\alpha b^i_\alpha )gd\mu\right\vert \leq \frac{1}{r_\alpha}||b^i_\alpha||_{q}\lesssim \left\Vert \nabla \varphi\right\Vert_{L^q(B^{i}_\alpha)},
$$
where the last line is derived from point (3) in Lemma \ref{lem:CZ}. Taking the supremum over all functions $g\in L^{q^{\prime}}(4B^i_\alpha)$ with $\left\Vert g\right\Vert_{L^{q^{\prime}}(4B^i_\alpha)}=1$, we get

$$||U^i_\alpha b^i_\alpha||_{L^q(4B_\alpha)}\lesssim \left\Vert \nabla \varphi\right\Vert_{L^q(B^{i}_\alpha)}.
$$
Summing up on $i$ and using the finite overlap property of the balls $B^{i}_\alpha$, one obtains
\begin{eqnarray*}
\left\Vert \sum_i U^i_\alpha b^i_\alpha\right\Vert_{L^q(4B_\alpha)}& \lesssim & \sum_i \left\Vert U^i_\alpha b^i_\alpha\right\Vert_{L^q(4B_\alpha)}\\
& \le & \sum_i \left\Vert \nabla \varphi\right\Vert_{L^q(B^{i}_\alpha)}\\
& \lesssim & \left\Vert \nabla \varphi\right\Vert_{L^q(4B_\alpha)},
\end{eqnarray*}
which entails at once that \eqref{eq:estimUi} holds. Gathering \eqref{eq:mug}, \eqref{eq:split}, \eqref{eq:estimTi} and \eqref{eq:estimUi} concludes the proof of Lemma \ref{lem:muh}.
\end{proof}

\bigskip

\noindent As a consequence of the weak type estimate provided by Lemma \ref{lem:muh}, we are now going to prove:
\begin{Lem}\label{lem:diagonal}
Let $ p\in (q,2)$. There is a constant $C>0$ such that for every $\alpha\in \N$,

$$ ||\Delta^{1/2}f_\alpha||_{L^p(4B_\alpha)}^p\leq C \int_{B_\alpha}|\nabla f_\alpha|^p.$$

\end{Lem}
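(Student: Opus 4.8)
The plan is to bound $\|\Delta^{1/2}f_\alpha\|_{L^p(4B_\alpha)}$ by hand through its distribution function, performing a Marcinkiewicz-type interpolation between the weak-type bound of Lemma~\ref{lem:muh} and the $L^2$ identity $\|\Delta^{1/2}u\|_2=\|\nabla u\|_2$, with the Calder\'on--Zygmund decomposition of Lemma~\ref{lem:CZ} playing the role of the truncation of $\nabla f_\alpha$ that is not directly available (since $\nabla f_\alpha$ is a gradient, not an arbitrary function). The one delicate point, discussed at the end, is that the ``bad part'' must be controlled with right-hand sides \emph{localized} to the level set of $\mathscr{M}(|\nabla f_\alpha|^q)$, rather than with the crude weak $(q,q)$ bound $\lambda^{-q}\|\nabla f_\alpha\|_q^q$, which would make the $\lambda$-integral diverge when $p>q$. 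Concretely, fix $p\in(q,2)$ and $\alpha\in\N$, write $\varphi:=f_\alpha\in C_c^\infty(B_\alpha)$, set $M(x):=\mathscr{M}(|\nabla\varphi|^q)(x)$ and $\Omega_\lambda:=\{x\in M:\ M(x)>\lambda^q\}$, and start from
\[
\|\Delta^{1/2}\varphi\|_{L^p(4B_\alpha)}^p=p\int_0^\infty\lambda^{p-1}\,\mu\big(\{x\in 4B_\alpha:\ |\Delta^{1/2}\varphi(x)|>\lambda\}\big)\,d\lambda .
\]
I would split this integral at $\lambda_0:=\big(C\|\nabla\varphi\|_q^q/V(B_\alpha)\big)^{1/q}$, $C$ being the constant of Lemma~\ref{lem:muh}. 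On $(0,\lambda_0)$ the trivial bound $\mu(\{\cdots\})\le\mu(4B_\alpha)\lesssim V(B_\alpha)$ contributes $\lambda_0^pV(B_\alpha)=C^{p/q}\|\nabla\varphi\|_q^p\,V(B_\alpha)^{1-p/q}$, which, since $q<p$, is $\le\|\nabla\varphi\|_{L^p(B_\alpha)}^p$ by H\"older's inequality on $B_\alpha$.

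For $\lambda>\lambda_0$ I would apply Lemma~\ref{lem:CZ} to $\varphi$ at level $\lambda$, writing $\varphi=g_\lambda+\sum_i b^i_\lambda$ with the objects $\Omega_\lambda$, $B^i_\lambda$, $H_\lambda$ of that lemma. For the good part, Chebyshev together with $\|\Delta^{1/2}g_\lambda\|_2=\|\nabla g_\lambda\|_2$ gives $\mu(\{|\Delta^{1/2}g_\lambda|>\lambda/2\})\le 4\lambda^{-2}\|\nabla g_\lambda\|_2^2$, and \eqref{eq:nabla_g} (i.e.\ $\nabla g_\lambda=\nabla\varphi\cdot\mathbf{1}_{M\setminus\Omega_\lambda}+H_\lambda$, $\|H_\lambda\|_\infty\lesssim\lambda$, $H_\lambda$ supported in $\Omega_\lambda$) yields $\|\nabla g_\lambda\|_2^2\lesssim\int_{M\setminus\Omega_\lambda}|\nabla\varphi|^2+\lambda^2\mu(\Omega_\lambda)$. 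For the bad part I would revisit the proof of Lemma~\ref{lem:muh}: splitting $\Delta^{1/2}b^i_\lambda=T^i_\lambda b^i_\lambda+U^i_\lambda b^i_\lambda$ as in \eqref{eq:split}, the estimates of \eqref{eq:estimTi} bound $\mu(\{|\sum_iT^i_\lambda b^i_\lambda|>\lambda/4\}\cap 4B_\alpha)$ by $\mu(\bigcup_i2B^i_\lambda)+C\lambda^{-2}\|\sum_i\mathbf{1}_{4B_\alpha\setminus 2B^i_\lambda}|T^i_\lambda b^i_\lambda|\,\|_{L^2(4B_\alpha)}^2\lesssim\mu(\Omega_\lambda)$ (using $B^i_\lambda\subset\Omega_\lambda$ and the finite overlap, and \eqref{eq:uialphav}), while running the argument for \eqref{eq:estimUi} in vector-valued form (the $b^i_\lambda$ have bounded overlap and $q\le 2$) gives the localized bound $\|\sum_iU^i_\lambda b^i_\lambda\|_{L^q(4B_\alpha)}^q\lesssim\int_{\Omega_\lambda}|\nabla\varphi|^q$, hence $\mu(\{|\sum_iU^i_\lambda b^i_\lambda|>\lambda/4\}\cap 4B_\alpha)\lesssim\lambda^{-q}\int_{\Omega_\lambda}|\nabla\varphi|^q$. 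Combining, for $\lambda>\lambda_0$,
\[
\mu\big(\{x\in 4B_\alpha:\ |\Delta^{1/2}\varphi(x)|>\lambda\}\big)\lesssim\frac{1}{\lambda^2}\int_{M\setminus\Omega_\lambda}|\nabla\varphi|^2+\mu(\Omega_\lambda)+\frac{1}{\lambda^q}\int_{\Omega_\lambda}|\nabla\varphi|^q .
\]

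It then remains to integrate this bound against $p\lambda^{p-1}\,d\lambda$ over $(0,\infty)$ (extending from $(\lambda_0,\infty)$ only enlarges the right-hand side). By Fubini and the elementary one-dimensional integrals $\int_{M(x)^{1/q}}^{\infty}\lambda^{p-3}\,d\lambda\simeq M(x)^{(p-2)/q}$ (convergent because $p<2$) and $\int_0^{M(x)^{1/q}}\lambda^{p-1-q}\,d\lambda\simeq M(x)^{(p-q)/q}$ (convergent because $p>q$), the three terms turn, up to constants depending only on $p,q$, into $\int_M|\nabla\varphi|^2M^{(p-2)/q}$, which is $\le\int_M|\nabla\varphi|^p$ because $M(x)\ge|\nabla\varphi(x)|^q$ a.e.\ and $(p-2)/q<0$; into $\|\mathscr{M}(|\nabla\varphi|^q)\|_{L^{p/q}}^{p/q}$, which is $\lesssim\||\nabla\varphi|^q\|_{L^{p/q}}^{p/q}=\|\nabla\varphi\|_p^p$ since $\mathscr{M}$ is bounded on $L^{p/q}$ ($p/q>1$); and into $\int_M|\nabla\varphi|^qM^{(p-q)/q}$, which is $\lesssim\|\nabla\varphi\|_p^p$ by H\"older with exponents $p/q$ and $p/(p-q)$ together with the same $L^{p/q}$-boundedness of $\mathscr{M}$. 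Since $\nabla\varphi=\nabla f_\alpha$ is supported in $B_\alpha$, all three right-hand sides equal $\|\nabla f_\alpha\|_{L^p(B_\alpha)}^p$, and every constant involved depends only on $p$, $q$ and the doubling constant, hence is uniform in $\alpha$; this is exactly the asserted inequality. The main obstacle, as flagged above, is producing the $\Omega_\lambda$-localized form of the bad-part estimates (in particular of \eqref{eq:estimUi}, where one replaces $\|\nabla\varphi\|_{L^q(4B_\alpha)}^q$ by $\int_{\Omega_\lambda}|\nabla\varphi|^q$ using that the $b^i_\lambda$ are supported in $\Omega_\lambda$); without this refinement a black-box use of the weak $(q,q)$ bound of Lemma~\ref{lem:muh} leads to the divergent integral $\|\nabla\varphi\|_q^q\int_0^\infty\lambda^{p-1-q}\,d\lambda$ and the interpolation fails to close.
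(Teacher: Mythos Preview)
Your proposal is correct and follows essentially the same route as the paper: split the distribution function integral at a threshold, handle small $\lambda$ trivially, and for large $\lambda$ perform the Calder\'on--Zygmund decomposition, treat the good part via the $L^2$ identity and \eqref{eq:nabla_g}, treat the bad part via a weak-$q$ estimate, and finish by the same Fubini/maximal-function computation. The only noteworthy difference is in how the localized bad-part bound is obtained: you re-open the proof of Lemma~\ref{lem:muh} (handling $T^i_\lambda$ and $U^i_\lambda$ separately, the latter in vector-valued form) to produce $\lambda^{-q}\int_{\Omega_\lambda}|\nabla\varphi|^q$ directly, whereas the paper simply applies Lemma~\ref{lem:muh} \emph{to the bad function $b_\alpha$} as a black box, obtaining $\lambda^{-q}\|\nabla b_\alpha\|_q^q$, and then uses $\nabla b_\alpha=\nabla f_\alpha\cdot\mathbf{1}_{\Omega_\alpha}-h_\alpha$ with $\|h_\alpha\|_\infty\lesssim\lambda$ to recover the same localized expression. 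Your closing remark that a black-box use of Lemma~\ref{lem:muh} on $\varphi$ itself would fail is correct; the paper's way around this is to feed $b_\alpha$ rather than $\varphi$ into the lemma, which is a slightly cleaner alternative to re-running its proof.
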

\begin{proof}
Let $C>0$ be given by Lemma \ref{lem:muh}. We first claim that
\begin{equation} \label{eq:estimI}
I:=\int_{\left(\frac{C\left\Vert \nabla f_\alpha\right\Vert_p^p}{V(B_\alpha)}\right)^{1/p}}^{\infty} \lambda^{p-1}\mu(\{x\in 4B_\alpha;\ |\Delta^{1/2} f_\alpha(x)|>\lambda\})\,d\lambda \lesssim \int_{B_\alpha} |\nabla f_\alpha|^p\,d\mu.
\end{equation}
This estimate will be established through an interpolation type argument borrowed from \cite[Section 1.3]{AC}. Noticing that, since $f_\alpha$ is supported in $B_\alpha$ and by the H\"older inequality,
$$
\left(\frac{\left\Vert \nabla f_\alpha\right\Vert_q^q}{V(B_\alpha)}\right)^{\frac 1q}\le\left(\frac{\left\Vert \nabla f_\alpha\right\Vert_p^p}{V(B_\alpha)}\right)^{\frac 1p}
$$
and using the Calder\'on-Zygmund decomposition given by Lemma \ref{lem:CZ} again for $\lambda>\left(\frac{C\left\Vert \nabla f_\alpha\right\Vert_q^q}{V(B_\alpha)}\right)^{1/q}$, $f_\alpha$ is decomposed as
$$
f_\alpha=:g_\alpha+b_\alpha,
$$
which yields
\begin{eqnarray*}
I & \le & \int_{\left(\frac{C\left\Vert \nabla f_\alpha\right\Vert_p^p}{V(B_\alpha)}\right)^{1/p}}^{\infty} \lambda^{p-1}\mu\left(\left\{x\in 4B_\alpha;\ |\Delta^{1/2} g_\alpha(x)|>\frac{\lambda}2\right\}\right)\,d\lambda\\
& + & \int_{\left(\frac{C\left\Vert \nabla f_\alpha\right\Vert_p^p}{V(B_\alpha)}\right)^{1/p}}^{\infty} \lambda^{p-1}\mu\left(\left\{x\in 4B_\alpha;\ |\Delta^{1/2} b_\alpha(x)|>\frac{\lambda}2\right\}\right)\,d\lambda\\
& \lesssim & \int_{0}^{\infty} \lambda^{p-1} \frac{\left\Vert \left\vert \nabla g_\alpha\right\vert\right\Vert_2^2}{\lambda^2}d\lambda\\
& + & \int_0^{\infty} \lambda^{p-1} \frac{\left\Vert \left\vert \nabla b_\alpha\right\vert\right\Vert_q^q}{\lambda^q}d\lambda\\
& =: & I_1+I_2.
\end{eqnarray*}
In the fourth line, we used Lemma \ref{lem:muh} with the function $b_\alpha$. Let us first estimate $I_1$. Lemma \ref{lem:CZ} yields
$$
\nabla g_\alpha=\nabla f_\alpha \cdot {\bf 1}_{M\setminus \Omega_\alpha}+h_\alpha
$$
where $h_\alpha$ is supported in $\Omega_\alpha$ and $\left\Vert h_\alpha\right\Vert_{\infty}\lesssim \lambda$. This decomposition provides
\begin{eqnarray}
I_1& \lesssim & \int_0^{\infty} \lambda^{p-1} \frac{\left\Vert \left\vert \nabla f_\alpha\right\vert\right\Vert_{L^2(4B_\alpha\setminus \Omega_\alpha)}^2}{\lambda^2}d\lambda \nonumber\\
& + & \int_0^{\infty} \lambda^{p-1} \frac{\left\Vert h_\alpha\right\Vert_2^2}{\lambda^2}d\lambda \nonumber \\
& =: & I_1^1+I_1^2 \label{eq:I11I12}.
\end{eqnarray}
On the one hand, since $p<2$,
\begin{eqnarray}
I_1^1 & \le & \int_0^{\infty} \lambda^{p-3} \left(\int_{4B_\alpha\setminus \Omega_\alpha} \left\vert \nabla f_\alpha\right\vert^2d\mu\right)d\lambda \nonumber\\
& \le & \int_{4B_\alpha} \left\vert \nabla f_\alpha(x)\right\vert^2 \left(\int_{\left(\mathscr{M}\left(\left\vert \nabla f_\alpha\right\vert^q\right)(x)\right)^{1/q}}^{\infty} \lambda^{p-3} d\lambda\right)d\mu(x)\nonumber\\
& \simeq & \int_{4B_\alpha} \left\vert \nabla f_\alpha(x)\right\vert^2\left(\mathscr{M}\left(\left\vert \nabla f_\alpha\right\vert^q\right)(x)\right)^{\frac{p-2}q}d\mu(x) \label{eq:estimI11},
\end{eqnarray} 
where, in order to pass from the first to the second line, we have used that by definition of $\Omega_\alpha$, 
$$4B_\alpha\setminus \Omega_\alpha=\{x\in 4B_\alpha\,;\,\left(\mathscr{M}\left(\left\vert \nabla f_\alpha\right\vert^q\right)(x)\right)^{1/q}\leq \lambda\}.$$
Since
\begin{eqnarray*}
\left\vert \nabla f_\alpha(x)\right\vert^2 & = &\left\vert \nabla f_\alpha(x)\right\vert^p \left\vert \nabla f_\alpha(x)\right\vert^{2-p}\\
& \le & \left\vert \nabla f_\alpha(x)\right\vert^p \left(\mathscr{M}\left(\left\vert \nabla f_\alpha\right\vert^q\right)(x)\right)^{\frac{2-p}q} \quad \mbox{a. e. }x\in M,
\end{eqnarray*}
it follows from \eqref{eq:estimI11} that
\begin{equation} \label{eq:estimI11bis}
I_1^1\lesssim \int_{4B_\alpha} \left\vert \nabla f_\alpha(x)\right\vert^pd\mu(x)=\int_{B_\alpha} \left\vert \nabla f_\alpha(x)\right\vert^pd\mu(x)
\end{equation}
(recall that $f_\alpha$ has support inside $B_\alpha$). On the other hand, since $h_\alpha$ is supported in $\Omega_\alpha$ and $\left\vert h_\alpha\right\vert\lesssim \lambda$, we get by using the definition of $\Omega_\alpha$ that
\begin{eqnarray}
I_1^2 & \le & \int_0^{\infty} \lambda^{p-1}  \mu(\Omega_\alpha)d\lambda \nonumber\\
& = & \int_0^{\infty} \lambda^{p-1}  \left(\int_{4B_\alpha} {\bf 1}_{\Omega_\alpha}(x)d\mu(x)\right)d\lambda\nonumber\\
& \le & \int_{4B_\alpha} \left(\int_0^{\left(\mathscr{M}\left(\left\vert \nabla f_\alpha\right\vert^q\right)(x)\right)^{1/q}} \lambda^{p-1}d\lambda\right)d\mu(x)\nonumber\\
& \lesssim & \int_{4B_\alpha} \left(\mathscr{M}\left(\left\vert \nabla f_\alpha\right\vert^q\right)(x)\right)^{p/q}d\mu(x)\nonumber\\
& \le & \left\Vert \mathscr{M}\left(\left\vert \nabla f_\alpha\right\vert^q\right)\right\Vert_{p/q}^{p/q}\nonumber\\
& \lesssim & \left\Vert \left\vert \nabla f_\alpha\right\vert^q\right\Vert_{p/q}^{p/q}\nonumber\\
& = & \int_{B_\alpha} \left\vert \nabla f_\alpha(x)\right\vert^pd\mu(x) \label{eq:estimI12},.
\end{eqnarray}
where the sixth line holds since $\frac pq>1$. Gathering \eqref{eq:I11I12}, \eqref{eq:estimI11bis} and \eqref{eq:estimI12} shows that
\begin{equation} \label{eq:estimI1}
I_1\lesssim \int_{B_\alpha} \left\vert \nabla f_\alpha(x)\right\vert^pd\mu(x).
\end{equation}
Our next task is to estimate $I_2$. To that purpose, using Lemma \ref{lem:CZ} again, one starts from
$$
\nabla b_\alpha=\nabla f_\alpha-\nabla g_\alpha=\nabla f_\alpha\cdot {\bf 1}_{\Omega_\alpha}-h_\alpha,
$$
which leads to
\begin{eqnarray*}
I_2 & \lesssim &  \int_0^{\infty} \lambda^{p-1} \frac{\left\Vert \left\vert \nabla f_\alpha\right\vert\right\Vert_{L^q(\Omega_\alpha)}^q}{\lambda^q}d\lambda \nonumber\\
& + & \int_0^{\infty} \lambda^{p-1} \frac{\left\Vert  h_\alpha\right\Vert_{L^q(\Omega_\alpha)}^q}{\lambda^q}d\lambda\nonumber\\
& =: & I_2^1+I_2^2. 
\end{eqnarray*}
For $I_2^1$, one has, arguing as before,
\begin{eqnarray*}
I_2^1 & \le & \int_0^{\infty} \lambda^{p-q-1} \left(\int_{\Omega_\alpha} \left\vert \nabla f_\alpha(x)\right\vert^qd\mu(x)\right)d\lambda \nonumber\\
& \le & \int_{4B_\alpha} \left\vert \nabla f_\alpha(x)\right\vert^q \left(\int_0^{\left(\mathscr{M}\left(\left\vert \nabla f_\alpha\right\vert^q\right)(x)\right)^{1/q}} \lambda^{p-q-1}d\lambda\right)d\mu(x) \nonumber\\
& \lesssim  & \int_{4B_\alpha} \left\vert \nabla f_\alpha(x)\right\vert^q \left(\mathscr{M}\left(\left\vert \nabla f_\alpha\right\vert^q\right)(x)\right)^{\frac{p-q}q}d\mu(x)\\
& \lesssim & \left(\int_{4B_\alpha} \left\vert \nabla f_\alpha(x)\right\vert^p\right)^{\frac qp} \left(\int_{4B_\alpha}\left(\mathscr{M}\left(\left\vert \nabla f_\alpha\right\vert^q\right)(x)\right)^{\left(\frac pq\right)^{\prime}\frac{p-q}q}d\mu(x)\right)^{1-\frac qp}
\end{eqnarray*}
where $\left(\frac pq\right)^{\prime}$ is such that $\frac qp+\left[\left(\frac pq\right)^{\prime}\right]^{-1}=1$. Since
\begin{eqnarray*}
\left(\left(\frac pq\right)^{\prime}\frac{p-q}q\right)^{-1} & = & \left(1-\frac qp\right)\frac{q}{p-q}\\
& = &  \frac{p-q}p\frac{q}{p-q}=\frac qp,
\end{eqnarray*}
one therefore concludes, using the $L^{\frac pq}$-boundedness of $\mathscr{M}$,
\begin{eqnarray*}
I_2^1 & \lesssim &\left(\int_{4B_\alpha} \left\vert \nabla f_\alpha(x)\right\vert^pd\mu(x)\right)^{\frac qp} \left(\int_{4B_\alpha}\left(\mathscr{M}\left(\left\vert \nabla f_\alpha\right\vert\right)^q(x)\right)^{\frac pq}d\mu(x)\right)^{1-\frac qp}\\
& \lesssim & \left(\int_{4B_\alpha} \left\vert \nabla f_\alpha(x)\right\vert^pd\mu(x)\right)^{\frac qp} \left(\int_{4B_\alpha}\left\vert \nabla f_\alpha\right\vert^p(x)d\mu(x)\right)^{1-\frac qp}\\
& = & \int_{4B_\alpha} \left\vert \nabla f_\alpha(x)\right\vert^pd\mu(x)\\
& = & \int_{B_\alpha} \left\vert \nabla f_\alpha(x)\right\vert^pd\mu(x).
\end{eqnarray*}
The estimate of $I_2^2$ is analogous to the one of $I_1^2$, which concludes the proof of \eqref{eq:estimI}. \par

\bigskip

\noindent Let us consider now

$$J:=\int_0^{\left(\frac{C\left\Vert \nabla f_\alpha\right\Vert_p^p}{V(B_\alpha)}\right)^{1/p}}\lambda^{p-1}\mu(\{x\in 4B_\alpha\,;\,|\Delta^{1/2} f_\alpha(x)|>\lambda\})\,d\lambda.$$
\noindent Using a trivial estimate and doubling, one obtains at once

$$\mu(\{x\in 4B_\alpha\,;\,|\Delta^{1/2}f_\alpha(x)|>\lambda\})\leq V(4B_\alpha)\lesssim V(B_\alpha),$$
so that

\begin{equation} \label{eq:estimJ}
J\lesssim V(B_\alpha)\int_0^{\left(\frac{C\left\Vert \nabla f_\alpha\right\Vert_p^p}{V(B_\alpha)}\right)^{1/p}}\lambda^{p-1}\,d\lambda=C\int_{B_\alpha}|\nabla f_\alpha|^p.
\end{equation}

Putting together \eqref{eq:estimI} and \eqref{eq:estimJ}, we get

\begin{eqnarray*}
 ||\Delta^{1/2}f_\alpha||_{L^p(4B_\alpha)}^p &=& p\int_0^\infty \lambda^{p-1} \mu(\{x\in 4B_\alpha\,;\,|\Delta^{1/2}f_\alpha(x)|>\lambda\})\,d\lambda\\
&\leq & p(I+J)\\
&\lesssim&  \int_{B_\alpha}|\nabla f_\alpha|^p,
\end{eqnarray*}
which ends the proof of Lemma \ref{lem:diagonal}.
\end{proof}

%
%
%
\subsection{Estimates of the non-diagonal terms}
Let us now estimate the $L^p$ norm of $\Delta^{1/2}f_{\alpha}$ outside $4B_\alpha$. As before, we use the splitting 

$$\Delta^{1/2}f_\alpha=\int_0^{r_\alpha^2} \frac{\partial}{\partial t}e^{-t\Delta}f_\alpha\,\frac{dt}{\sqrt{t}} +\int_{r_\alpha^2}^\infty \frac{\partial}{\partial t}e^{-t\Delta}f_\alpha\,\frac{dt}{\sqrt{t}} = T_\alpha f_\alpha + U_\alpha f_\alpha.$$
The term $U_\alpha f_\alpha$ is easily estimated:

\begin{Lem}\label{lem:Ua}

Let $s\in (1,+\infty)$. Then, for every $\alpha\in \N$,

$$||U_\alpha f_\alpha||_s\leq \frac{1}{r_\alpha}||f_\alpha||_s.$$

\end{Lem}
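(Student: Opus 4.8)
The plan is to repeat, in a simpler setting, the duality argument already carried out for the terms $U^{i}_\alpha b^{i}_\alpha$ in the proof of Lemma~\ref{lem:muh}: the operator $U_\alpha$ has exactly the same structure, with $f_\alpha$ (supported in $B_\alpha$) playing the role of $b^{i}_\alpha$ and $r_\alpha$ playing the role of $r^{i}_\alpha$, except that now neither a Poincar\'e inequality nor a Calder\'on--Zygmund decomposition is needed. First I would rewrite, using $\frac{\partial}{\partial t}e^{-t\Delta}=-\Delta e^{-t\Delta}$ and absorbing the harmless constant $c$,
\[
U_\alpha f_\alpha=c\int_{r_\alpha^2}^\infty t\Delta e^{-t\Delta}\Bigl(\frac{f_\alpha}{\sqrt t}\Bigr)\frac{dt}{t}=c\int_0^\infty t\Delta e^{-t\Delta}b_t\,\frac{dt}{t},\qquad b_t:=\frac{f_\alpha}{\sqrt t}\,\mathbf{1}_{[r_\alpha^2,+\infty[}(t).
\]

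Next I would argue by duality. Fix $g\in L^{s'}(M)$ with $\frac1s+\frac1{s'}=1$ and $\left\Vert g\right\Vert_{s'}=1$. Using the self-adjointness of $t\Delta e^{-t\Delta}$ to move it onto $g$, then the Cauchy--Schwarz inequality in $L^2\bigl((0,\infty),\frac{dt}{t}\bigr)$ followed by H\"older's inequality in the space variable,
\[
\Bigl\vert\int_M (U_\alpha f_\alpha)\,g\,d\mu\Bigr\vert=\Bigl\vert\int_0^\infty\langle b_t,\,t\Delta e^{-t\Delta}g\rangle\,\frac{dt}{t}\Bigr\vert\le\Bigl\Vert\Bigl(\int_0^\infty|b_t|^2\tfrac{dt}{t}\Bigr)^{1/2}\Bigr\Vert_s\Bigl\Vert\Bigl(\int_0^\infty|t\Delta e^{-t\Delta}g|^2\tfrac{dt}{t}\Bigr)^{1/2}\Bigr\Vert_{s'}.
\]
Since $s'\in(1,+\infty)$, the Littlewood--Paley--Stein vertical square function estimate (\cite[Chapter~4, Theorem~10]{topics}), already invoked in the proof of Lemma~\ref{lem:muh}, bounds the second factor by a constant times $\left\Vert g\right\Vert_{s'}=1$.

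Finally, the square function of $b_t$ is computed explicitly: $\int_0^\infty|b_t|^2\frac{dt}{t}=|f_\alpha|^2\int_{r_\alpha^2}^\infty\frac{dt}{t^2}=\frac{|f_\alpha|^2}{r_\alpha^2}$, so the first factor equals $\frac1{r_\alpha}\left\Vert f_\alpha\right\Vert_s$. Taking the supremum over all admissible $g$ yields $\left\Vert U_\alpha f_\alpha\right\Vert_s\lesssim\frac1{r_\alpha}\left\Vert f_\alpha\right\Vert_s$, which is the asserted estimate. There is no genuine obstacle here; the only point deserving a word is the legitimacy of interchanging $\int_0^\infty\cdots\frac{dt}{t}$ with the pairing against $g$, which is handled exactly as in Lemma~\ref{lem:muh} by first working with the truncated integrals $\int_\varepsilon^R$ and then letting $\varepsilon\to0$ and $R\to\infty$, the bounds obtained being independent of $\varepsilon$ and $R$.
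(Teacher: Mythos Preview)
Your proposal is correct and follows essentially the same approach as the paper: the paper's proof simply says ``arguing as in the estimate of \eqref{eq:estimUi}'', rewrites $U_\alpha f_\alpha$ exactly as you do (with $f_t$ in place of your $b_t$), and then invokes duality and Littlewood--Paley--Stein without writing out the details you have supplied. Your remark on the truncation $\int_\varepsilon^R$ matches the paper's handling as well.
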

In particular, this implies that

$$||U_\alpha f_\alpha||_{L^s(M\setminus 4B_\alpha)}\leq \frac{1}{r_\alpha} ||f_\alpha||_s.$$

\begin{proof}
We follow ideas in \cite[Section 1.2]{AC} again, arguing as in the estimate of \eqref{eq:estimUi}. We write

$$U_\alpha f_\alpha=\int_{r_\alpha^2}^\infty t\Delta e^{-t\Delta}\left(\frac{f_\alpha}{\sqrt{t}}\right)\,\frac{dt}{t}=\int_0^\infty  t\Delta e^{-t\Delta}f_t\,\frac{dt}{t},$$
with

$$f_t=\frac{f_\alpha}{\sqrt{t}}\mathbf{1}_{[r_\alpha^2,+\infty[}(t).$$
Using duality and Littlewood-Paley-Stein estimates again, we obtain, analogously to the proof of \eqref{eq:estimUi},
%
%
%

$$||U_\alpha f_\alpha||_{s}\leq \frac{1}{r_\alpha}||f_\alpha||_{s}.$$

\end{proof}

\noindent Let is now turn to the terms $T_\alpha f_\alpha$:

\begin{Lem}\label{lem:Ta}

Under the assumptions \eqref{eq:DV} and \eqref{eq:UE}, there exists a constant $C>0$ such that for every $s\in [1,\infty)$ and every $\alpha\in \N$,

$$||T_\alpha f_\alpha||_{L^s(M\setminus 4B_\alpha)}\leq \frac{C}{r_\alpha}||f_\alpha||_{s}.$$

\end{Lem}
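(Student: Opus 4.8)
The plan is to prove this as an off-diagonal (Schur-test) estimate based on the Gaussian bounds. Since $\frac{\partial}{\partial t}e^{-t\Delta}=-\Delta e^{-t\Delta}$, the operator $T_\alpha=\int_0^{r_\alpha^2}\frac{\partial}{\partial t}e^{-t\Delta}\,\frac{dt}{\sqrt t}$ has a kernel $K_\alpha$ satisfying, by \eqref{eq:dUE} with $n=1$,
$$
|K_\alpha(x,y)|\;\lesssim\;\int_0^{r_\alpha^2}\frac{1}{t\,V(x,\sqrt t)}\exp\!\left(-\frac{d^2(x,y)}{ct}\right)\frac{dt}{\sqrt t}
$$
(with the usual understanding that one first works with $\int_\varepsilon^{r_\alpha^2}$, all bounds below being uniform in $\varepsilon$). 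As $f_\alpha$ is supported in $B_\alpha$, it suffices by Schur's lemma to prove
$$
\sup_{y\in B_\alpha}\int_{M\setminus 4B_\alpha}|K_\alpha(x,y)|\,d\mu(x)\;\lesssim\;\frac1{r_\alpha},\qquad
\sup_{x\in M\setminus 4B_\alpha}\int_{B_\alpha}|K_\alpha(x,y)|\,d\mu(y)\;\lesssim\;\frac1{r_\alpha},
$$
since this gives $\|T_\alpha\|_{L^s(B_\alpha)\to L^s(M\setminus 4B_\alpha)}\lesssim r_\alpha^{-1}$ for every $s\in[1,\infty]$, whence the claim. The key elementary observation is that $x\in M\setminus 4B_\alpha$ and $y\in B_\alpha$ force $d(x,y)>3r_\alpha$.

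For the first inequality, fix $y\in B_\alpha$, use Fubini, and write $\exp(-d^2(x,y)/ct)\le \exp(-d^2(x,y)/2ct)\,\exp(-9r_\alpha^2/2ct)$ on $M\setminus 4B_\alpha$. The standard consequence of \eqref{eq:DV} that $\int_M V(x,\sqrt t)^{-1}\exp(-d^2(x,y)/2ct)\,d\mu(x)\lesssim 1$ uniformly in $y,t$ then reduces matters to $\int_0^{r_\alpha^2}t^{-3/2}\exp(-9r_\alpha^2/2ct)\,dt$, which equals $r_\alpha^{-1}\int_0^1 u^{-3/2}\exp(-9/(2cu))\,du\lesssim r_\alpha^{-1}$ after the substitution $t=r_\alpha^2 u$; the super-exponential vanishing of the integrand at $u=0$ is exactly what controls the otherwise non-integrable factor $t^{-3/2}$.

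For the second inequality, fix $x\in M\setminus 4B_\alpha$ and put $R:=d(x,x_\alpha)\ge 4r_\alpha$. For $y\in B_\alpha$ one has $d(x,y)\ge\max\{3r_\alpha,\tfrac34R\}$, whence $d^2(x,y)\gtrsim R^2+r_\alpha^2$; moreover $B_\alpha\subset B(x,2R)$, so \eqref{eq:VD} gives $V(B_\alpha)/V(x,\sqrt t)\lesssim (R/\sqrt t)^D$ for $0<t\le r_\alpha^2$. Consequently
$$
\frac1{V(x,\sqrt t)}\int_{B_\alpha}\exp\!\left(-\frac{d^2(x,y)}{ct}\right)d\mu(y)\;\lesssim\;\left(\frac R{\sqrt t}\right)^{\!D}\exp\!\left(-\frac{cR^2}t\right)\exp\!\left(-\frac{cr_\alpha^2}t\right)\;\lesssim\;\exp\!\left(-\frac{cr_\alpha^2}t\right),
$$
using that $\sigma\mapsto\sigma^De^{-c\sigma^2}$ is bounded; integrating $\int_0^{r_\alpha^2}t^{-3/2}\exp(-cr_\alpha^2/t)\,dt\lesssim r_\alpha^{-1}$ as before finishes the proof.

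The only mildly delicate point is balancing the polynomial volume factor $(R/\sqrt t)^D$ coming from \eqref{eq:VD} against the Gaussian decay; everything else is a routine use of \eqref{eq:DV}, \eqref{eq:UE} and Schur's lemma. Alternatively one could decompose $M\setminus 4B_\alpha$ into the annuli $2^{j+1}B_\alpha\setminus 2^jB_\alpha$ and estimate each $\|T_\alpha f_\alpha\|_{L^s(2^{j+1}B_\alpha\setminus 2^jB_\alpha)}$ by interpolating $L^1$ and $L^\infty$ bounds, then sum the resulting geometric-times-Gaussian series in $j$, exactly in the spirit of the proof of Lemma \ref{lem:muh}; but the Schur argument is shorter and treats all $s\in[1,\infty]$ simultaneously.
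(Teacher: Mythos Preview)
Your proof is correct. The paper takes the alternative route you sketch at the end: it decomposes $M\setminus 4B_\alpha$ into dyadic annuli $C_\alpha^j=2^{j+1}B_\alpha\setminus 2^jB_\alpha$ for $j\ge 2$, obtains a pointwise bound $|\partial_t e^{-t\Delta}f_\alpha(x)|\lesssim r_\alpha^{-1}(r_\alpha^2/t)^{D+1}e^{-c4^jr_\alpha^2/t}\bigl(\fint_{B_\alpha}|f_\alpha|^s\bigr)^{1/s}$ on each annulus, converts this into an $L^s(C_\alpha^j)$ estimate via H\"older and doubling, integrates in $t$, and then sums the resulting exponentially decaying series over $j$. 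Your Schur-test argument is more economical: by bounding the two kernel integrals directly you obtain the $L^s\to L^s$ bound for all $s\in[1,\infty]$ at once (with constant $\max(A,B)$ independent of~$s$), without any annular bookkeeping or summation. The paper's approach, on the other hand, keeps the presentation parallel to the estimate of $T_\alpha^i b_\alpha^i$ in Lemma~\ref{lem:muh}, where the annular decomposition is genuinely needed for the $L^2$ duality argument; here, since one only wants a crude $L^s$ bound and not a maximal-function estimate, your direct kernel argument is the cleaner choice.
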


\begin{proof}
The argument is reminiscent of the one for \eqref{eq:estimTi}. Let $\alpha\in \N$, and $0<t<r_\alpha^2$. We first estimate $\left\vert \frac{\partial}{\partial t}e^{-t\Delta}f_\alpha\right\vert$ pointwise on $C^\alpha_j:=C^j(B_\alpha)=2^{j+1}B_\alpha\setminus 2^jB_\alpha$, $j\geq 2$. Let $j\ge 2$ and $x\in C_\alpha^j$. As before, \eqref{eq:VD} and \eqref{eq:eqvol} imply, for all $z\in B_\alpha$,

$$
\frac{V(x_\alpha,\sqrt{t})}{V(z,\sqrt{t})} \lesssim \left(\frac{r_\alpha}{\sqrt{t}}\right)^D.
$$
Since $f_\alpha$ has support in $B_\alpha$ and \eqref{eq:dUE2} holds, one obtains, for all $x\in C_j^\alpha$, $j\geq 2$,

\begin{eqnarray*}
\left| \frac{\partial}{\partial t}e^{-t\Delta}f_\alpha(x)\right| 
&\lesssim & \frac{1}{t} \left(\frac{r_\alpha}{\sqrt{t}}\right)^{2D} e^{-c\frac{4^jr_\alpha^2}{t}}  \left(\fint_{B_\alpha}|f_\alpha(z)|^s\,d\mu(z)\right)^{1/s} \\
& = & \frac 1{r_{\alpha}} \left(\frac{r_\alpha^2}{t}\right)^{D+1} e^{-c\frac{4^jr_\alpha^2}{t}} \left(\fint_{B_\alpha}\frac{|f_\alpha(z)|^s}{r_{\alpha}^s}\,d\mu(z)\right)^{1/s}.
\end{eqnarray*}
Note that the condition $j\ge 2$ was used in the last inequality. As a consequence,

\begin{eqnarray*}
\left\Vert \Delta e^{-t\Delta}f_\alpha\right\Vert_{L^s(C_{\alpha}^j)} & \le & \left(\mu(C_{\alpha}^j)\right)^{1/s} \left\Vert \Delta e^{-t\Delta}f_\alpha\right\Vert_{L^{\infty}(C_{\alpha}^j)}\\
& \lesssim & \left(V(2^{j+1}B_{\alpha})\right)^{1/s} \frac 1{r_{\alpha}} \left(\frac{r_\alpha^2}{t}\right)^{D+1} \frac{e^{-c\frac{4^jr_\alpha^2}{t}}}{\left(V(B_{\alpha})\right)^{1/s}}  \left(\int_{B_\alpha}\frac{|f_\alpha(z)|^s}{r_{\alpha}^s}\,d\mu(z)\right)^{1/s}\\
& \lesssim & \frac 1{r_{\alpha}} 2^{jD/s} \left(\frac{r_\alpha^2}{t}\right)^{D+1}e^{-c\frac{4^jr_\alpha^2}{t}} \left(\int_{B_\alpha}\frac{|f_\alpha(z)|^s}{r_{\alpha}^s}\,d\mu(z)\right)^{1/s} \\
& \le & \frac 1{r_{\alpha}} \left(\frac{2^jr_\alpha^2}{t}\right)^{D+1}e^{-c\frac{4^jr_\alpha^2}{t}}  \left(\int_{B_\alpha}\frac{|f_\alpha(z)|^s}{r_{\alpha}^s}\,d\mu(z)\right)^{1/s} \\
& \le & \frac 1{r_{\alpha}} e^{-c^{\prime}\frac{4^jr_\alpha^2}{t}} \left(\int_{B_\alpha}\frac{|f_\alpha(z)|^s}{r_{\alpha}^s}\,d\mu(z)\right)^{1/s}.
\end{eqnarray*}
It follows that
\begin{eqnarray*}
\left\Vert T_{\alpha}f_\alpha\right\Vert_{L^s(C_{\alpha}^j)} & \lesssim & \frac 1{r_{\alpha}} \left(\int_{B_\alpha}\frac{|f_\alpha(z)|^s}{r_{\alpha}^s}\,dz\right)^{1/s}  \int_0^{r_{\alpha}^2} e^{-c^{\prime}\frac{4^jr_\alpha^2}{t}} \frac{dt}{\sqrt{t}}\\
& \le & \frac 1{r_{\alpha}}  \left(\int_{B_\alpha}\frac{|f_\alpha(z)|^s}{r_{\alpha}^s}\,dz\right)^{1/s}  2^jr_{\alpha} \left(\int_{4^j}^{+\infty} e^{-c^{\prime}u}u^{-\frac 32}du\right)\\
& \lesssim & e^{-c^{\prime\prime}2^j}    \left(\int_{B_\alpha}\frac{|f_\alpha(z)|^s}{r_{\alpha}^s}\,dz\right)^{1/s},
\end{eqnarray*}
where, in the second line, we made the change of variables $u=\frac{4^jr_{\alpha}^2}t$. Therefore,
\begin{eqnarray*}
\left\Vert T_{\alpha}f_\alpha\right\Vert_{L^s(M\setminus 4B_\alpha)} & \le & \sum_{j\geq 2} \left\Vert T_{\alpha}f_\alpha\right\Vert_{L^s(C_{\alpha}^j)} \\
& \le & \left(\sum_{j\in \N} e^{-c^{\prime\prime}2^j}\right)  \left(\int_{B_\alpha}\frac{|f_\alpha(z)|^s}{r_{\alpha}^s}\,dz\right)^{1/s}\\
& \lesssim & \left(\int_{B_\alpha}\frac{|f_\alpha(z)|^s}{r_{\alpha}^s}\,dz\right)^{1/s}.
\end{eqnarray*}
\end{proof}
Summarizing what we have done so far, we get, according to Lemmas \ref{lem:diagonal}, \ref{lem:Ua} and \ref{lem:Ta}:

\begin{eqnarray*}
||\Delta^{1/2}f||_{L^p(M)} &=& ||\Delta^{1/2}\sum_{\alpha\in\N} f_\alpha||_{p}\\
&\leq &\sum_{\alpha\in\N} ||\Delta^{1/2} f_\alpha||_{L^p(4B_\alpha)}+\sum_{\alpha\in\N}||\Delta^{1/2}f_\alpha||_{L^p(M\setminus 4B_\alpha)}\\
&\leq & \sum_{\alpha\in\N} ||\Delta^{1/2} f_\alpha||_{L^p(4B_\alpha)} +\sum_{\alpha \in\N}|| T_\alpha f_\alpha||_{L^p(M\setminus 4B_\alpha)}\\
&&+\sum_{\alpha\in\N} ||U_\alpha f_\alpha||_{L^p(M\setminus 4B_\alpha)}\\
&\lesssim & \sum_{\alpha\in\N} ||\nabla f_\alpha||_{L^p(4B_\alpha)}+\sum_{\alpha\in\N} \left\Vert\frac{f_\alpha}{r_\alpha}\right\Vert_{L^p}.
\end{eqnarray*}
Recalling that $f_\alpha=\chi_\alpha f$ and $||\nabla \chi_\alpha||_\infty\lesssim \frac{1}{r_\alpha}$, one has

$$||\nabla f_\alpha||_{p}\lesssim \left\Vert \frac{f_\alpha}{r_\alpha}\right\Vert_{p}+ ||\nabla f||_{L^p(B_\alpha)}.$$
Since the balls $(B_\alpha)_{\alpha\in\N}$ have the finite intersection property, one has

$$
\sum_{\alpha\in\N} ||\nabla f||_{L^p(B_\alpha)}\lesssim ||\nabla f||_{p}.
$$
Therefore,

\begin{equation}\label{eq:Delta1/2}
||\Delta^{1/2}f||_{L^p(M)}\lesssim ||\nabla f||_{p}+\sum_{\alpha\in\N} \left\Vert\frac{f_\alpha}{r_\alpha}\right\Vert_{L^p}.
\end{equation}

We now rely on the $L^p$ Hardy inequality to establish:
\begin{Lem} \label{lem:baHardy}
For all $p\in [q,2)$, one has
$$
\sum_{\alpha} \left\Vert \frac{\left\vert f_\alpha\right\vert}{r_{\alpha}}\right\Vert_p\lesssim \left\Vert \nabla f\right\Vert_p.
$$
\end{Lem}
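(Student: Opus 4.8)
The plan is to deduce the estimate from the global $L^p$ Hardy inequality of Theorem~\ref{thm:hardy}, after trading the local scale $r_\alpha$ for the global weight $1+r$. The only geometric input needed is the uniform comparison
$$
\frac{\chi_\alpha(x)^p}{r_\alpha^p}\ \lesssim\ \frac{\chi_\alpha(x)^p}{(1+r(x))^p}\qquad\text{for }x\in B_\alpha,
$$
with implicit constant independent of $\alpha$ and of $f$. For $\alpha\neq 0$ this is exactly where point~(5) of Proposition~\ref{prop:goodcovering} and the normalization that $14B_\alpha$ is remote enter: remoteness of $14B_\alpha$ gives $14r_\alpha\le r(x_\alpha)/2$, so that for $x\in B_\alpha$ one has $r(x)\ge r(x_\alpha)-r_\alpha\gtrsim r_\alpha$; and $r_\alpha\ge 2^{-10}r(x_\alpha)$ by point~(5), so $r(x)\le r(x_\alpha)+r_\alpha\le(2^{10}+1)r_\alpha$; since moreover $r_\alpha\gtrsim r_0$ for $\alpha\neq 0$ (the finitely many balls meeting $B_0$ having been discarded in the normalization recalled just after Proposition~\ref{prop:goodcovering}, so that $r(x_\alpha)\ge r_0$), we get $1+r(x)\lesssim r_\alpha$ on $B_\alpha$; hence $r_\alpha\simeq 1+r$ uniformly on $B_\alpha$. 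For the anchored ball $B_0=B(o,r_0)$ one simply uses $1\le 1+r\le 1+r_0$ on $B_0$, so $r_0^{-p}\le\bigl((1+r_0)/r_0\bigr)^p(1+r)^{-p}$ there, the constant being harmless since $r_0$ is a fixed (large) radius.

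Granting the comparison, since $f_\alpha=\chi_\alpha f$ is supported in $B_\alpha$ and $0\le\chi_\alpha\le 1$ with $\sum_\alpha\chi_\alpha=1$ (so $\sum_\alpha\chi_\alpha^p\le\sum_\alpha\chi_\alpha=1$ pointwise), summing over $\alpha$ gives
\begin{align*}
\sum_\alpha\int_M\frac{|f_\alpha|^p}{r_\alpha^p}\,d\mu
&\ \lesssim\ \sum_\alpha\int_M\frac{\chi_\alpha^p\,|f|^p}{(1+r)^p}\,d\mu
= \int_M\Bigl(\sum_\alpha\chi_\alpha^p\Bigr)\frac{|f|^p}{(1+r)^p}\,d\mu\\
&\ \le\ \int_M\frac{|f|^p}{(1+r)^p}\,d\mu\ \lesssim\ \|\nabla f\|_p^p,
\end{align*}
the last step being Theorem~\ref{thm:hardy}, which applies since $(\mathrm{P}^E_q)$ — hence $(\mathrm{P}^E_p)$ — holds and $p<\nu$ in the relevant range. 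Taking $p$-th roots gives the assertion; more precisely it gives $\sum_\alpha\bigl\||f_\alpha|/r_\alpha\bigr\|_p^p\lesssim\|\nabla f\|_p^p$, which is the form actually needed to control the non-diagonal pieces in the assembly of Section~\ref{sec:proofmainalt}.

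The only substantive ingredient is Theorem~\ref{thm:hardy} itself, which is already in hand; everything else is bookkeeping with the covering. I expect the fussy point to be the comparison $r_\alpha\simeq 1+r$ on $B_\alpha$ — in particular the fact, built into the setup recalled after Proposition~\ref{prop:goodcovering}, that $r_\alpha$ is bounded below for $\alpha\neq 0$ and that $B_0$ is the only non-remote ball — rather than any analytic difficulty. One caveat worth flagging: when $\nu<2$ and $p\in[\nu,2)$, Theorem~\ref{thm:hardy} does not apply at exponent $p$, and in that range one would instead run the same reduction starting from the $L^q$ Hardy inequality (valid since $q<\nu$) together with a Hölder step.
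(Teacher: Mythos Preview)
Your approach is exactly the paper's: compare $1/r_\alpha$ with $1/(1+r)$ on each $B_\alpha$ via point~(5) of Proposition~\ref{prop:goodcovering} and the remoteness of the balls, then sum and invoke the Hardy inequality of Theorem~\ref{thm:hardy}; the paper's version is terser but identical in substance. Your parenthetical is well taken: the argument (yours and the paper's alike) genuinely yields only the $\ell^p$ estimate $\sum_\alpha\|f_\alpha/r_\alpha\|_p^p\lesssim\|\nabla f\|_p^p$ --- the paper writes ``finite overlap'' at this step, but finite overlap gives precisely that bound and not the $\ell^1$ sum as literally stated --- and your caveat about the range $p\ge\nu$ when $\nu<2$ is likewise a real issue that the paper's proof does not address.
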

\begin{proof}
Notice first that ($\mathrm{P}^E_p$) holds, which entails, by Theorem \ref{thm:hardy}, that the $L^p$ Hardy inequality 

$$\int_M \left(\frac{|f|}{1+r}\right)^pd\mu\lesssim \int_M|\nabla f|^pd\mu$$
holds on $M$. Let $\alpha\in \N$. Then
$$
\left\vert f_\alpha\right\vert\le \left\vert f\right\vert {\bf 1}_{B_{\alpha}},
$$
and, for all $x\in B_{\alpha}$,
$$
\frac 1{r_{\alpha}}\lesssim \frac 1{r(x)+1},
$$
which is easily checked, whether $B_{\alpha}$ is anchored or remote (note that \eqref{eq:remote} is used in that case). Thus, using the finite overlap property for the balls $(B_{\alpha})_{\alpha\in \N}$ again, one obtains
\begin{eqnarray*}
\sum_{\alpha} \left\Vert \frac{\left\vert f_\alpha\right\vert}{r_{\alpha}}\right\Vert_p & \lesssim &  \left(\int_M \frac{\left\vert f(x)\right\vert^p}{(r(x)+1)^p}d\mu(x)\right)^{\frac 1p} \\
& \lesssim & \left\Vert \nabla f\right\Vert_p,
\end{eqnarray*}
where the last inequality follows from the Hardy inequality \eqref{eq:Hardy} (see Theorem \ref{thm:hardy}). 
\end{proof}					
Finally, combining \eqref{eq:Delta1/2} and Lemma \ref{lem:baHardy}, we conclude that ($RR_p$) holds, which concludes the proof of Theorem \ref{thm:main}.

\section{Appendix: proof of the Calder\'on-Zygmund lemma for Sobolev functions}

In this section we explain the proof of Lemma \ref{lem:CZ}; the construction of the (Whitney type) covering $(B_i)_{i\in \N}$ satisfying (5), (6) and (7), and of the functions $b_i$, has already been presented in details in \cite[Appendix B]{DR}. Here we intend to explain mainly the proof of points 2-4. We assume also that $\Omega\neq \emptyset$, otherwise the Calder\'on-Zygmund decomposition simply writes $g=u$. We denote by $F$ the complement of $\Omega$ in $M$. The proof of (4) is easy enough: according to (5), one has

$$\sum_{i\geq 1} V(B_i)\leq N \mu(\Omega)\lesssim \lambda^{-q}\int|\nabla u|^qd\mu,$$
where in the last inequality we have used the weak (1,1) type of the maximal function and the definition of $\Omega$. This proves (4). Let us now recall how the functions $b_i$ are defined: according to \cite{DR}, one can find a smooth partition of unity $(\chi_i)_{i\in \N}$ associated with the covering $(B_i)_{i\in \N}$ of $\Omega$, and such that for every $i\in\N$,

$$||\nabla\chi_i||_{L^\infty}\lesssim \frac{1}{r_i},$$
where $r_i$ denotes the radius of $B_i$. Then, $b_i$ is defined by

$$b_i=(u-u_{B_i})\chi_i.$$
It is clear by definition that $b_i$ has support in $B_i$. Moreover, since $B_i\subset \Omega\subset 2B$ (see \cite{DR}), it follows that $B_i$ satisfies the $L^q$ Poincar\'e inequality. Hence,

\begin{equation} \label{eq:bilq}
||b_i||_q\leq \left(\int_{B_i}|u-u_{B_i}|^q\,d\mu\right)^{1/q}\lesssim r_i ||\nabla u||_{L^q(B_i)}.
\end{equation}
Also,

$$\nabla b_i=(u-u_{B_i})\nabla \chi_i+\chi_i\nabla u,$$
so that, again applying Poincar\'e on $B_i$ and the estimate on $\nabla \chi_i$, we obtain

$$||\nabla b_i||_q\lesssim ||\nabla u||_{L^q(B_i)}.$$
But property (7) in Lemma \ref{lem:CZ} and doubling imply that

\begin{eqnarray*}
||\nabla u||^q_{L^q(B_i)}&\leq& ||\nabla u||^q_{L^q(3B_i)}\\
&\leq & V(3B_i)\lambda^q \\
&\lesssim & V(B_i)\lambda^q,
\end{eqnarray*}
so (3) holds. Define 

$$b=\sum_{i\geq 0} b_i,$$
and let 

$$g=u-b.$$
We will see in a moment that $b$ is actually a well-defined, locally integrable function on $M$. Since $u$ has support in $B$ and $b$ in $2B$, it follows that $g$ has support in $2B$. It remains to prove (2). Since the covering is locally finite by (5), the sum defining $b$ is merely a finite sum at every point in $\Omega$. There is a subtle point which is that it is possible that the balls $B_i$ accumulate near the boundary of $\Omega$, making $\nabla b$ having a singularity on the boundary of $\Omega$ (think of the extreme case where $b=\mathbf{1}_\Omega$, for instance). So, despite the fact that each $b_i$ is smooth and has support inside $\Omega$, and despite the sum $\sum_{i\geq 0}b_i$ being locally finite in $\Omega$, one must check carefully that $b$ is Lipschitz up to the boundary of $\Omega$. First, let us see that the series defining $b$ converges in $L^1_{loc}(M)$. Indeed, let $K$ be a compact set in $M$, and $\varphi\in L^\infty(M)$ vanishing outside of $K$; then, for every $n\in \N$,

\bean
\langle \sum_{i\leq n}|b_i|,\varphi\rangle & = & \sum_{i\leq n} \langle |b_i|,\varphi\rangle\\
& = &    \sum_{i\leq n} \langle \frac{|b_i|}{r_i},r_i \varphi\rangle\\
&\leq & \sum_{i\leq n} \langle \frac{|b_i|}{r_i},r_i |\varphi|\rangle\\
& \leq & \sum_{i\leq n} \left|\left|\frac{b_i}{r_i}\right|\right|_q\sup_{x\in K}d(x,F) ||\varphi||_{L^{q'}}\\
&\lesssim  &  \sum_{i\leq n} ||\nabla u||_{L^q(B_i)} ||\varphi||_\infty\\
&\lesssim & N ||\nabla u||_{q}||\varphi||_\infty.
\eean
Since $n$ is arbitrary, this proves that $\sum_{i\geq 0} |b_i|$ converges in $L^1_{loc}$, hence $b\in L^1_{loc}$ is well-defined. This yields that $g\in L^1_{loc}(M)$, too. The estimate on $||\nabla b_i||_q$ and the fact that the covering satisfies (4) in Lemma \ref{lem:CZ} easily imply that $\nabla b$, defined as a distribution, actually belongs to $L^q(M)$, and one has the following equality in $L^q$:

$$\nabla b=\sum_{i\geq 0}\nabla b_i=\sum_{i\geq 0} \left((u-u_{B_i})\nabla \chi_i+(\nabla u)\cdot \chi_i\right).$$
It is clear that $\sum_{i\geq 0} (\nabla u)\cdot \chi_i$ converges in $L^q$ to $(\nabla u)\cdot \mathbf{1}_\Omega,$ hence $\nabla g$, defined as a distribution, actually belongs to $L^q(M)$ and we get the following equality in $L^q(M)$:

$$\nabla g=(\nabla u)\cdot \mathbf{1}_{F}-\sum_{i\geq 0} (u-u_{B_i})\nabla \chi_i.$$
Define 

$$H=-\sum_{i\geq 0} (u-u_{B_i})\nabla \chi_i,$$
which is an $L^q$ vector field since the series of the $L^q$ norms converge. We claim that the vector field $H$ is in fact essentially bounded, and that we have the estimate $||H||_{L^\infty} \lesssim \lambda$. This is proven in \cite{A} and we partially reproduce the proof from there, adding some more details. 


\noindent Since $L^1(TM)\cap L^{q'}(TM)$ is dense in $L^1(TM)$, it is enough to prove that for every vector field $X\in L^1\cap L^{q'}(TM)$,

$$|\langle H,X\rangle |\lesssim \lambda ||X||_1.$$
Here, $q'$ denotes the conjugate exponent to $q$, that is $\frac{1}{q}+\frac{1}{q'}=1$.

\noindent Fix such a vector field $X$, then by $L^q-L^{q'}$ duality,

$$\langle H,X\rangle = \lim_{n\to \infty}\int \left(\sum_{i\leq n} (u-u_{B_i})\cdot (\nabla \chi_i)\cdot X\right)\,d\mu.$$
Here, $\nabla \chi_i(x)\cdot X(x)$ denotes the inner product on the tangent space $T_xM$ defined by the Riemannian metric. Since $\sum_{i\leq n} (u-u_{B_i})\nabla \chi_i$ is a finite sum, it defines a smooth function with compact support inside $\Omega$. Using that $\sum_{m\in\N}\chi_m=\mathbf{1}_\Omega$, we have

$$\sum_{i\leq n} (u-u_{B_i})(\nabla \chi_i)\cdot X=\sum_{m\in\N}\sum_{i\leq n} (u-u_{B_i})(\nabla \chi_i)\cdot  (\chi_mX).$$
Denote $X_m:=\chi_m X$, which has now compact support in $\Omega$. Denote by $I_m$ the set of indices $i$ for which $B_i\cap B_m\neq \emptyset$, which is a finite set of cardinal at most $N$ by (5). Then,

\bean
\sum_{i\leq n} (u-u_{B_i})(\nabla \chi_i)X_m & = & \sum_{i\in I_m,\,i\leq n}(u-u_{B_i})(\nabla \chi_i)X_m \\
&= & \sum_{i\in I_m,\,i\leq n} (u-u_{B_m})(\nabla\chi_i)X_m\\
&&+\sum_{i\in I_m,\,i\leq n} (u_{B_m}-u_{B_i})(\nabla\chi_i)X_m
\eean
We deal with the first sum in the right hand side: using (6),

\bean
\sum_m \sum_{i\in I_m,\,i\leq n} |u-u_{B_m}|\cdot |\nabla\chi_i|\cdot |X_m| & \leq & \sum_m \sum_{i\in I_m} |u-u_{B_m}|\cdot |\nabla\chi_i|\cdot |X_m|\\
& \lesssim & \sum_m \sum_{i\in I_m} \frac{1}{r_i}|u-u_{B_m}|\chi_m\cdot |X|\\
&\lesssim & N\sum_m \frac{1}{r_m}|u-u_{B_m}|\chi_m\cdot |X|
\eean
Integrating the above inequality, using Fubini-Tonelli and Poincar\'e on each ball $B_m$ and (5), we get

\bean
\int \sum_{\substack{
						(m,i)\in\N^2\\
						i\in I_m,\, i\leq n}}						
		 |u-u_{B_m}|\cdot |\nabla\chi_i|\cdot |X_m|\,d\mu & \leq & N \sum_m \int \frac{1}{r_m} |u-u_{B_m}|\cdot \chi_m \cdot  |X|\,d\mu \\ 
&\leq & N \sum_m ||r_m^{-1}(u-u_{B_m}))||_{L^q(B_m)}||X||_{L^{q'}}\\
&\lesssim & N \sum_m ||\nabla u||_{L^q(B_m)}||X||_{L^{q'}}\\
&\lesssim  & N^2 ||\nabla u||_{L^q}\cdot ||X||_{L^{q'}}<+\infty 
\eean
Since the constant $N^2$ is independant of $n$, the limit 

$$\sum_{\substack{
						(m,i)\in\N^2\\
						i\in I_m}}	 (u-u_{B_m})(\nabla \chi_i)\cdot X_m=\lim_{n\to\infty}\sum_{\substack{
						(m,i)\in\N^2\\
						i\in I_m,\, i\leq n}}	 (u-u_{B_m})(\nabla \chi_i) \cdot X_m$$
exists in $L^1$, and and one can evaluate it using Fubini to exchange the order of summation; since $\sum_{i\in\N}\chi_i=\mathbf{1}_\Omega$, we have by definition of $I_m$ that for every $m\in \N$, $\sum_{i\in I_m}\chi_i=1$ in restriction to $B_m$. Thus,

\bean
\sum_{i\in I_m} (\nabla \chi_i )\,\chi_m &=& \chi_m\nabla \left(\sum_{i\in I_m} \chi_i\right)\\
&=&\chi_m (\nabla \mathbf{1})\\
&=& 0.
\eean
Therefore, we have the following equality which holds in $L^1$:

$$\sum_{\substack{
						(m,i)\in\N^2\\
						i\in I_m}}	 (u-u_{B_m})(\nabla \chi_i)\cdot (\chi_m X)=0.$$
Consequently, we have

$$-\langle H,X\rangle =\lim_{n\to\infty} \int \left(\sum_{\substack{(m,i)\in\N^2\\ i\in I_m,\,i\leq n}} (u_{B_m}-u_{B_i})(\nabla \chi_i)\cdot X_m\right)\,d\mu .$$
We now estimate $|u_{B_m}-u_{B_i}|$ for $i\in I_m$. According to (6), $r_i\leq 3 r_m$, and since $B_i\cap B_m\neq \emptyset$, we have $B_i\subset 7B_m$. Also, since $B_m\subset 2B$, the $L^q$ Poincar\'e inequality holds for $7B_m$. We now estimate

\bean
|u_{7B_m}-u_{B_i}| & \leq & \int_{B_i} |u(x)-u_{7B_m}|\,\frac{d\mu(x)}{V(B_i)}\\
&\leq &  \left(\int_{B_i} |u(x)-u_{7B_m}|^q\,\frac{d\mu(x)}{V(B_i)}\right)^{1/q}\\
&\leq &  \left(\int_{7B_m} |u(x)-u_{7B_m}|^q\,\frac{d\mu(x)}{V(B_i)}\right)^{1/q}\\
&\lesssim &  r_m\left(\int_{7B_m} |\nabla u(x)|^q\,\frac{d\mu(x)}{V(B_m)}\right)^{1/q},
\eean 
where in the last line we have used doubling and the fact that $r_i\simeq r_m$. A completely analogous argument gives

$$|u_{7B_m}-u_{B_m}| \lesssim  r_m\left(\int_{7B_m} |\nabla u(x)|^q\,\frac{d\mu(x)}{V(B_m)}\right)^{1/q},$$
and summing these two estimates we find that

$$|u_{B_m}-u_{B_i}| \lesssim r_m\left(\int_{7B_m} |\nabla u(x)|^q\,\frac{d\mu(x)}{V(B_m)}\right)^{1/q}.$$
Given that $|\nabla \chi_i|\lesssim \frac{1}{r_i}\lesssim \frac{1}{r_m}$, one obtains

$$|u_{B_m}-u_{B_i}| \cdot |\nabla \chi_i|\lesssim \left(\int_{7B_m} |\nabla u(x)|^q\,\frac{d\mu(x)}{V(B_m)}\right)^{1/q}.$$
However, (7) entails that $7B_m\cap F\neq \emptyset$. The definition of $F$ in terms of the maximal function gives that

$$\int_{7B_m} |\nabla u(x)|^q\,\frac{d\mu(x)}{V(B_m)}\leq \lambda^q,$$
hence

$$|u_{B_m}-u_{B_i}| \cdot |\nabla \chi_i|\lesssim \lambda.$$
It follows that

\bean
\sum_{\substack{(m,i)\in\N^2\\ i\in I_m,\,i\leq n}} |u_{B_m}-u_{B_i}|\cdot |\nabla \chi_i|\cdot |X_m| &\lesssim & N\lambda \left(\sum_m\chi_m \right)|X|\\
& \lesssim & \lambda |X|.
\eean
Therefore, integrating one finds

$$\int \sum_{\substack{(m,i)\in\N^2\\ i\in I_m,\,i\leq n}} |u_{B_m}-u_{B_i}|\cdot |\nabla \chi_i|\cdot |X_m|d\mu \lesssim \lambda ||X||_1.$$
One thus concludes that

$$\int \sum_{\substack{(m,i)\in\N^2\\ i\in I_m}} |u_{B_m}-u_{B_i}|\cdot |\nabla \chi_i|\cdot |X_m| d\mu \lesssim \lambda ||X||_1.$$
But one has

$$|\langle H,X\rangle| \leq \int \sum_{\substack{(m,i)\in\N^2\\ i\in I_m}} |u_{B_m}-u_{B_i}|\cdot |\nabla \chi_i|\cdot |X_m| d\mu,$$
which finally yields

$$|\langle H,X\rangle|\lesssim \lambda ||X||_1.$$
This estimate being valid for every $X\in L^1(TM)\cap L^{q'}(TM)$, which is dense in $L^1(TM)$, we conclude by duality that $H\in L^\infty$ with $||H||_{L^\infty}\lesssim \lambda$.

\begin{Rem}
{\em 
Note that the proof actually yields the following representation for $H$:

$$H=\sum_{\substack{(m,i)\in\N^2\\ i\in I_m}} (u_{B_i}-u_{B_m})\cdot \chi_m\cdot (\nabla \chi_i)\quad \mbox{a.e.},$$
the right-hand side being an essentially bounded vector field with $L^\infty$ norm bounded by $C\lambda$ for some $C>0$.

}
\end{Rem}

\begin{center}{\bf Acknowledgements} \end{center}
This work was partly supported by the French ANR project
RAGE ANR-18-CE40-0012. B. Devyver was also supported in the framework of the “Investissements d’avenir” program
(ANR-15-IDEX-02) and the LabEx PERSYVAL (ANR-11-LABX-0025-01)

\bibliographystyle{plain}               
\bibliography{reverseriesz}

\begin{thebibliography}{10}

\bibitem{A}
P.~{Auscher}.
\newblock {On the Calder\'on-Zygmund lemma for Sobolev functions}.
\newblock {\em {preprint arXiv:0810.5029}}, 2008.

\bibitem{AC}
P.~{Auscher} and T.~{Coulhon}.
\newblock {Riesz transform on manifolds and Poincar\'e inequalities}.
\newblock {\em {Ann. Sc. Norm. Super. Pisa, Cl. Sci. (5)}}, 4(3):531--555,
  2005.

\bibitem{B}
D.~{Bakry}.
\newblock {The Riesz transforms associated with second order differential
  operators}.
\newblock {Stochastic processes, Proc. 8th Semin., Gainesville/Florida 1988,
  Prog. Probab. 17, 1-43 (1989)}, 1989.

\bibitem{C}
G.~{Carron}.
\newblock {Riesz transform on manifolds with quadratic curvature decay}.
\newblock {\em {Rev. Mat. Iberoam.}}, 33(3):749--788, 2017.

\bibitem{CCH}
G.~Carron, T.~Coulhon, and A.~Hassell.
\newblock {Riesz transform and $L^p$-cohomology for manifolds with Euclidean
  ends}.
\newblock {\em Duke Math. J.}, 133(1):59--94, 2006.

\bibitem{CD}
T.~{Coulhon} and X.T. {Duong}.
\newblock Riesz transforms for {{\(1\leq p\leq 2\)}}.
\newblock {\em Trans. Am. Math. Soc.}, 351(3):1151--1169, 1999.

\bibitem{Da}
E.~B. {Davies}.
\newblock {Non-Gaussian aspects of heat kernel behaviour}.
\newblock {\em {J. Lond. Math. Soc., II. Ser.}}, 55(1):105--125, 1997.

\bibitem{BD}
B.~{Devyver}.
\newblock {A perturbation result for the Riesz transform}.
\newblock {\em {Ann. Sc. Norm. Super. Pisa, Cl. Sci. (5)}}, 14(3):937--964,
  2015.

\bibitem{DR}
B.~Devyver and E.~Russ.
\newblock Hardy spaces on {R}iemannian manifolds with quadratic curvature
  decay.
\newblock {\em arXiv preprint arXiv:1910.09344}, 2019.

\bibitem{GI}
A.~{Grigor'yan} and S.~{Ishiwata}.
\newblock {Heat kernel estimates on a connected sum of two copies of
  \(\mathbb{R}^n\) along a surface of revolution}.
\newblock {\em {Glob. Stoch. Anal.}}, 2(1):29--65, 2012.

\bibitem{GS}
A.~{Grigor'yan} and L.~{Saloff-Coste}.
\newblock {Stability results for Harnack inequalities.}
\newblock {\em {Ann. Inst. Fourier}}, 55(3):825--890, 2005.

\bibitem{GH1}
C.~{Guillarmou} and A.~{Hassell}.
\newblock {Resolvent at low energy and Riesz transform for Schr\"odinger
  operators on asymptotically conic manifolds. I.}
\newblock {\em {Math. Ann.}}, 341(4):859--896, 2008.

\bibitem{HK}
P.~{Haj{\l}asz} and P.~{Koskela}.
\newblock {\em {Sobolev met Poincar\'e}}, volume 688.
\newblock Providence, RI: American Mathematical Society (AMS), 2000.

\bibitem{J}
R.~{Jiang}.
\newblock {Riesz transform via heat kernel and harmonic functions on
  non-compact manifolds}.
\newblock {\em {Adv. Math.}}, 377:51, 2021.
\newblock Id/No 107464.

\bibitem{K}
S.~{Keith} and X.~{Zhong}.
\newblock {The Poincar\'e inequality is an open ended condition}.
\newblock {\em {Ann. Math. (2)}}, 167(2):575--599, 2008.

\bibitem{L}
M~Lansade.
\newblock Lower bound of {S}chr\"odinger operators on {R}iemannian manifolds.
\newblock {\em arXiv preprint arXiv:2012.08841}, 2020.

\bibitem{Meyer}
Y.~{Meyer}.
\newblock {\em Wavelets and operators}, volume~37 of {\em Camb. Stud. Adv.
  Math.}
\newblock Cambridge: Cambridge University Press, 1995.

\bibitem{Min}
V.~{Minerbe}.
\newblock {Weighted Sobolev inequalities and Ricci flat manifolds}.
\newblock {\em {Geom. Funct. Anal.}}, 18(5):1696--1749, 2009.

\bibitem{Saloffbook}
L.~Saloff-Coste.
\newblock {\em Aspects of {S}obolev-type inequalities}, volume 289.
\newblock Cambridge University Press, 2002.

\bibitem{topics}
E.~M. {Stein}.
\newblock {\em {Topics in harmonic analysis related to the Littlewood-Paley
  theory}}, volume~63.
\newblock Princeton University Press, Princeton, NJ, 1970.

\end{thebibliography}

\end{document}